\documentclass[a4paper, 11pt]{article}
\usepackage[utf8x]{inputenc}
\usepackage{graphicx, amsfonts,amsmath,amssymb,amsthm}
\usepackage{natbib}
\usepackage{hyperref,cleveref}
\usepackage[a4paper,top=3cm,bottom=2cm,left=2.5cm,right=2.5cm,marginparwidth=5cm]{geometry}
\usepackage{enumitem}
\usepackage{commands}
\usepackage{tikzSettings}
\usepackage{caption}
\usepackage{subcaption}
\providecommand{\keywords}[1]{\textbf{\textit{Keywords}} #1}

\author{Victor Cohen, Axel Parmentier \\
Ecole des Ponts Paristech, \textsc{Cermics}, Universit\'e Paris-Est, Marne-la-Vall\'ee, France}
\title{Two generalizations of Markov blankets}

\date{
		\today}

\begin{document}
\maketitle

\begin{abstract}
\noindent In a probabilistic graphical model on a set of variables $V$, the Markov blanket of a random vector $B$ is the minimal set of variables conditioned to which $B$ is independent from the remaining of the variables $V \backslash B$.
We generalize Markov blankets to study how a set $C$ of variables of interest depends on~$B$.
Doing that, we must choose if we authorize vertices of $C$ or vertices of $V \backslash C$ in the blanket. 
We therefore introduce two generalizations.
The Markov blanket of $B$ in $C$  is the minimal subset of $C$ conditionally to which $B$ and $C$ are independent.
It is naturally interpreted as the inner boundary through which $C$ depends on $B$, and finds applications in feature selection.
The Markov blanket of $B$ in the direction of $C$ is the nearest set to $B$ among the minimal sets conditionally to which ones $B$ and $C$ are independent, and finds applications in causality.
It is the outer boundary of $B$ in the direction of $C$.
We provide algorithms to compute them that are not slower than the usual algorithms for finding a d-separator in a directed graphical model. 
All our definitions and algorithms are provided for directed and undirected graphical models.
\end{abstract}

\keywords{Markov blanket, probabilistic graphical models, feature selection, causality}




\section{Introduction}
\label{sec:introduction}

A distribution on a set of variables $V$ factorizes as a probabilistic graphical model on a graph $G = (V,A)$ if variables in $V$ satisfy some independences that are encoded by $G$.
Given a set $B$ of variables in $V$, the Markov blanket of $B$ is the \emph{boundary in $V \backslash B$ through which $B$ and $V\backslash B$ are dependent}. More formally, it is the smallest subset $M$ of $V\backslash B$ such that 
\begin{equation}\label{eq:MB_general_characterization}
	B \indep V\backslash \left( B \cup M \right) | M
\end{equation}
for any distribution that factorizes as a probabilistic graphical model on $G$, 
where, given three random vectors $X$, $Y$, and $Z$, we denote by  $$X \indep Y | Z$$
 the fact that $X$ is independent from $Y$ given $Z$.
As illustrated on Figure~\ref{fig:usualMB}, $\mb(B)$ corresponds to the ``outer boundary'' of $B$, and $\mb(V\backslash B)$ to its ``inner boundary''. 
The Markov Blanket of $B$ is the smallest set of variables of $V \backslash B$ containing all the information about $B$ that is in~$V\backslash B$ \citep{Pellet:2008}. 

In this paper, \emph{we introduce two generalizations of Markov blankets to model how a subset of variables depends on another.}
The first is the \emph{Markov blanket of $B$ in $C$}, which we denote by $\mb_C(B)$. 
It is the smallest subset $M$ of $C$ such that $ B \indep C \backslash M \big| M$.
The second is the \emph{Markov blanket of $B$ in the direction of $D$}, which we denote by $\mb(B \rightarrow D)$.
Among the sets $M$ in $V \backslash B$ such that $B \indep D | M$ and that are minimal for inclusion, it is the ``nearest'' to $B$. 
Figure~\ref{fig:innerAndOuterBoundaries} illustrated how these notions can be interpreted as inner and outer boundaries.

We introduce $\mb_C(B)$ and $\mb(B\rightarrow D)$ in directed and undirected graphical models. 
We characterize $\mb_C(B)$ and $\mb(B\rightarrow D)$ in terms of separation and d-separation, which provides polynomial time algorithms to compute them.
Our characterizations can take into account the fact that some variables $E$ have been observed. 

\begin{ex} \emph{Feature selection and Markov blanket of $B$ in $C$.}
Suppose that we observe the variables in $C$ and want to predict the value of the variables in $B$ \citep{Kohavi:1997}.
Feature selection aims at finding in $C$ the most relevant variables to make the prediction on $B$.
If we know that $B$ and $C$ are composed of vertices of a larger probabilistic graphical model $G$, then the Markov blanket of $B$ in $C$ is the set of variables we are interested in: it is the smallest subset of $C$ that contains all the information on $B$ that is in $C$.

If we cannot observe the variables in $C$ but we can observe all the other variables in $V \backslash C$, 
we need to find a minimal set in $V \backslash (C \cup B)$ that contains all ``effect'' of $C$ on $B$: the Markov blanket of $B$ in the direction of $C$.
\end{ex}

\begin{ex}\label{ex:causality} \emph{Causality and Markov blanket of $B$ in the direction of $D$.}
Suppose that a medical doctor observes that one patient that suffers from disease $D$ has an abnormally blood sugar level~$B$.
The fact that $B$ and $D$ are correlated does not mean that $B$ has an influence on~$D$.
Indeed, if $D$ might cause $B$, it might also be that $B$ and $D$ are both caused by another factor.
Fixing $B$ will cure the patient from $D$ only if $B$ is a cause of $D$.
Counting the number of patients suffering from $D$ among those having $B$ indicates the correlation of $B$ and $D$, i.e.,~the conditional probability $\bbP(D|B)$ of $D$ given $B$, but not the causal effect of $B$ and $D$.
To measure this \emph{causal effect}, we need to compute the conditional probability of $D$ given $B$ in an experiment where, all other things being equal, parameter $B$ is controlled.
We denote it by $\bbP(D | \mathrm{do}(B))$.
If $B$ and $D$ are random variables of a probabilistic graphical model, causality theory enables to identify if the causal effect $\bbP(D | \mathrm{do}(B))$ can be computed from historical data without setting up a new experiment, and to compute it when it is possible. 
\citet{PearlShpitserP12} introduce an algorithm which returns all the causal effects $\bbP(D | \mathrm{do}(B))$ that can be computed in a directed graphical model. 
This algorithm, 
which uses the back-door criterion \citep{Pearl93graphicalmodels}, 
requires to compute a d-separator between $(\dsc{B} \cap \asc{D}) \cup D$ and $B$ in the graph where we remove arcs outgoing from $B$, where $\asc{M}$ and $\dsc{M}$ respectively denote the ascendants and descendants of a set of a vertices $M$.
Let $S$ be such d-separator. Computing the causal effect of $B$ on $D$ becomes equivalent to computing conditional probabilities and marginals in a directed graphical model (\citet[e.g. Theorem 1.14]{Lauritzen99causalinference}) :
$$\bbP(D | \mathrm{do}(B=b)) = \sum_{s} \bbP(D | S=s,B=b)\bbP(S=s)$$
Hence, we need to perform an inference task to compute the probabilities in the sum above.
This latter inference problem is easier if the d-separator is small and near to $B$.
The Markov Blanket of $(\dsc{B} \cap \asc{D}) \cup D$ in the direction of $B$ is therefore an excellent candidate as d-separator~$S$: it is the nearest from $(\dsc{B} \cap \asc{D}) \cup D$ among all the minimal d-separator between $(\dsc{B} \cap \asc{D}) \cup D$ and~$B$.
\end{ex}



\begin{figure}
\begin{center}



\begin{tikzpicture}
\def\w{0.35}
\def\h{0.35}

\draw[thick, fill=gray!20] (0,0) rectangle (11*\w,8*\h);
\draw[thick] (4*\w,2*\h) rectangle (9*\w,6*\h);
\node at (4.7*\w,2.7*\h) {$B$};

\node at (0.6*\w,0.6*\h) {$V$};

\end{tikzpicture}
\quad
\begin{tikzpicture}
\def\w{0.35}
\def\h{0.35}

\draw[thick, fill=gray!20] (0,0) rectangle (11*\w,8*\h);
\draw[dashed,fill=cyan!75!magenta,opacity=0.5] (3*\w,1*\h) rectangle (10*\w,7*\h);
\fill[fill=blue!20] (4*\w,2*\h) rectangle (9*\w,6*\h);
\draw[thick,fill=cyan, fill opacity = 0.5] (4*\w,2*\h) rectangle (9*\w,6*\h);
\draw[dashed,fill=blue!20] (5*\w,3*\h) rectangle (8*\w,5*\h);
\node at (6*\w,6.5*\h) {$\mb(B)$};
\node at (6.5*\w,2.6*\h) {$\mb(V\backslash B)$};


\end{tikzpicture}

\end{center}
\caption{Markov blanket of $B$ and $V \backslash B$}
\label{fig:usualMB}
\end{figure}

\begin{figure}
\begin{center}
\begin{tikzpicture}

\def\h{1}

\fill[fill=gray!20] (0,-0.05) rectangle (11+2,2*\h+0.05);
\draw[fill=blue!20, thick] (1,0) rectangle (4,2*\h);
\draw[dashed,fill=cyan,opacity=0.5] (3,0) rectangle (4,2*\h);
\draw[dashed,fill=cyan!75!magenta,opacity=0.5] (4,0) rectangle (5,2*\h);

\draw[fill=red!20, thick] (7+2,0) rectangle (10+2,2*\h);
\draw[dashed,fill=cyan!25!magenta,opacity=0.5] (6+2,0) rectangle (7+2,2*\h);
\draw[dashed,fill=magenta,opacity=0.5] (7+2,0) rectangle (8+2,2*\h);

\node at (0.2,0.2) {$V$};
\node at (1.3,1*\h) {$B$};
\node at (3.2,1*\h) {$\mb_B(C)$};
\node at (5.1,1*\h) {$\mb(B\rightarrow C)$};

\node at (7.9,1*\h) {$\mb(C\rightarrow B)$};
\node at (9.9,1*\h) {$\mb_C(B)$};
\node[align=right] at (11.7,1*\h) {$C$};


\end{tikzpicture}
\end{center}
\caption{Markov blankets as boundaries between $B$ and $C$: $\mb_C(B)$ is the inner boundary of $C$ in the direction of $B$, $\mb(C \rightarrow B)$ is the outer boundary of $C$ in the direction of $B$, $\mb_B(C)$ is the inner boundary of $B$ in the direction of $C$ and $\mb(B \rightarrow C)$ is the outer boundary of $B$ in the direction of $C$}
\label{fig:innerAndOuterBoundaries}
\end{figure}
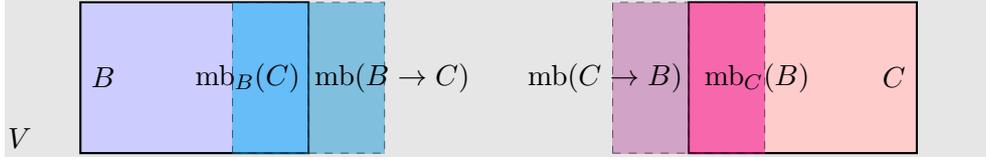


Section~\ref{sec:preliminaries_on_probabilistic_graphical_models} introduces the notions and notations we need on directed and undirected graphical models, as well as a literature review on Markov blankets. 
Section~\ref{sec:markov_blanket_in_a_set} the introduces the Markov blanket of $B$ in $C$, and Section~\ref{sec:directional_markov_blanket} the Markov blanket of $B$ in the direction of $D$.

\section{Preliminaries on probabilistic graphical models}
\label{sec:preliminaries_on_probabilistic_graphical_models}
\subsection{Graphs}
\label{sub:graphs}

A \emph{graph} is a pair $G = (V,A)$ where $V$ is a finite set and $A$ is a family of unordered pairs from $V$. 
A \emph{vertex} $v$ is an element of $V$.
In an \emph{undirected graph}, the pairs $e=(u,v)$  in $A$  are unordered and called \emph{edges}.
In a \emph{directed graph}, the pairs $a = (u,v)$ in $A$ are ordered and called \emph{arcs}.

A $u$-$v$ \emph{path} $P$ in a graph is a sequence of vertices $v_0,\ldots,v_k$ such that $v_0=u$, $v_k = v$, and $(v_{i-1},v_i)$ belongs to $A$ for each $i$ in $[k]$.
Remark that if $v_1,\ldots,v_k$ is a path in an undirected graph, then $v_k,\ldots,v_1$ is also a path.
But if $v_1,\ldots,v_k$ is a path in an directed graph, then $v_k,\ldots,v_1$ is generally not a path.
A \emph{cycle} in a graph is a path $v_0,\ldots,v_k$ such that $k>0$ and $v_0=v_k$.
An directed graph is \emph{acyclic} if it has no cycle.
A $u$-$v$ \emph{trail} in an acyclic directed graph is a sequence of vertices $v_1,\ldots,v_k$ such that $v_0=u$, $v_k = v$, and either $(v_{i-1},v_i)$ or $(v_i,v_{i-1})$ belongs to $A$ for each $i$ in $[i]$.
A vertex $v_i$ in a trail $v_0,\ldots,v_k$ is a \emph{v-structure} if $0<i<k$ and $(v_{i-1},v_i)$ and $(v_{i+1},v_i)$ belong to $A$.
A \emph{clique} in an undirected graph is a subset $C$ of vertices of $V$ such that, if $u$ and $v$ are two distinct elements of $V$, then $(u,v)$ belongs to $A$.

Let $G$ be an acyclic directed graph. A \emph{parent} 
 of a vertex $v$ is a vertex $u$ such that $(u,v)$ 
 belongs to~$A$; we denote by $\prt{v}$ the set of parents of $v$.
A vertex $u$ is an \emph{ascendant} (resp.~a \emph{descendant}) of $v$ if there exists a $u$-$v$~path (resp.~a $v$-$u$ path). 
We denote respectively $\asc{v}$ and $\dsc{v}$ the set of ascendants and descendants of $v$.
Finally, let $\casc{v} = \{v\}\cup \asc{v}$, and $\cdsc{v} = \{v\} \cup \dsc{v}$.
For a set of vertices $C$, the parent set of $C$, again denoted by $\prt{C}$, is the set of vertices $u$ that are parents of a vertex $v\in C$.
We define similarly $\asc{C}$, and $\dsc{C}$.

We associate with each vertex $v$ in $V$ a random variable $X_v$ taking its value in a finite set $\calX_v$.
For any subset $A$ of $V$, we define $X_A$ as the subvector $(X_v)_{v \in A}$, and $\calX_A$ as the Cartesian product $\bigotimes_{v\in A}\calX_v$.

\subsection{Undirected graphical model}
\label{sub:undirected_graphical_model}

Given an undirected graph $G=(V,A)$, a probability distribution $\bbP$ on $\calX_V$ factorizes as an \emph{undirected graphical model} on $G$ if there exists a collection $\calC$ of cliques of $G$, and mappings $\psi_C : \calX_C \rightarrow \bbR^+$ for each $C$ in $\calC$ such that 
$$ \bbP(X_V = x_V) = \frac{1}{Z} \prod_{C \in \calC} \psi_C(x_C), $$
where $Z$ is a constant ensuring that $\bbP$ is a probability distribution. 
Vertices of a graphical model corresponds to random variables, and sets of vertices to random vectors.

A $u$-$v$ path $P$ is \emph{active} given a subset of vertices $M$ if no vertex of $P$ is in $M$.
A set of vertices $M$ separates two sets of vertices $X$ and $Y$ if there is no active path between a vertex of $X$ and a vertex of $Y$, which we denote by
$$ X \perp Y | M. $$
Given three random vectors $X$, $Y$, and $M$, graphical model theory tells us that $X$ is independent from $Y$ given $M$ for any distribution that factorizes as a graphical model on $G$ if and only if $M$ separates $X$ and $Y$ (see e.g.~Theorem 4.3 of \citet{koller2009probabilistic}).




\medskip

We are interested in independences of probabilistic graphical models $G$, that is, independences that are true for any distribution that factorizes as a graphical models.
Such independences must therefore be characterized only in terms of the structure of $G$, that is, in terms of separation and d-separating.

\subsection{Directed graphical models}
\label{sub:directed_graphical_models}

Let $G = (V,A)$ be an acyclic directed graph.
A conditional distribution of $v$ given its parent is a mapping $p_{v|\prt{v}} : \calX_v \times \calX_{\prt{v}} \rightarrow \bbR_+$ such that, for each $x_{\prt{v}}$ in $\calX_{\prt{v}}$, the mapping $x_v \mapsto p_{v|\prt{v}}(x_{v},x_{\prt{v}})$ is a probability distribution.
A distribution $\bbP$ on $\calX_v$ factorizes as a \emph{directed graphical model} on $G$ if there exists conditional distributions $p_{v|\prt{v}}$ such that
$$ \bbP(x_V) = \prod_{v \in V} p_{v|\prt{v}}(x_v,x_{\prt{v}}).$$
Given a subset $M$ in $V$, a $u$-$v$ trail $P$ is \emph{active} if and only if any vertex $v$ in $P$ that is not a v-structure does not belong to $P$, and any vertex $v$ in $P$ that is a v-structure is such that $\cdsc{v} \cap M \neq \emptyset$.
Given three random vectors $X$, $Y$, and $M$, then $M$ \emph{d-separates} $X$ and $Y$ if there is no active trail between $X$ and $Y$ that is active given $M$, which we again denote by
$$ X \perp Y | M. $$ 
Three random vectors $X$, $Y$, and $M$ are such that $X$ is independent from $Y$ given $M$ for any distribution that factorizes as a graphical model on $G$ is and only if $X$ is d-separated from $Y$ given $M$ (see e.g.~Theorems 3.4 and 3.5 of \citet{koller2009probabilistic}).

\subsection{Markov blankets and separators}
\label{sub:markov_blankets_and_separators}

A \emph{separator} (resp.~a d-separator) between two set of vertices $B$ and $D$ given an evidence set $E$ in an undirected (resp.~directed) graphical model $G$ is a set of vertices $M$ that separates (resp.~d-separates) $B$ and $D$.
A (d-)separator $M$ between two sets of vertices $B$ and $D$ given an evidence set $E$ is \emph{minimal} if for any strict subset $M'$ of $M$, $M' \cup E$ does not (d-)separate $C$ and $D$. 

The \emph{Markov blanket }$\mb(B)$ of $B$ is the smallest (d-)separator $M \subseteq V \backslash B$ of $B$ and $V \backslash B$. By smallest, we mean that any (d-)separator $M \subseteq V \backslash B$ of $B$ and $V \backslash B$ contains $\mb(B)$.

\subsection{Literature review}
\label{sub:literature_review}

Markov blankets are built on the fact that independences in a graphical model are characterized in terms of separation and d-separation. \citet{Lauritzen1990} introduces the notion \emph{separation} in a undirected graphical model, which coincides with the separation in graph theory. The author also introduces the notion of \emph{d-separation} in a directed graphical model. \citet{Geiger1990dseparation} presents the Bayes-ball algorithm that checks if two vertices in a directed graph $G=(V,A)$ are \emph{d-separated} by a given set of vertices in $O(|V|+|A|)$.
\citet{Pearl1988} introduced the notion of Markov Blanket in the context of causal structure learning, under the name Markov boundary.
Given samples a set of random variables, causal structure learning aims at learning a directed graphical model that represents the causal links between the random variables. 
\citet{Pearl1988} and \citet{Spirtes2000} characterize graphically the Markov blanket: in undirected graphical model, it is the set of neighbors of $B$, while in directed graphical models, it is the set of parents, co-parents, and children of $B$.

Our generalizations of Markov blankets are minimal d-separators between two sets $B$ and $D$.
As we mentioned in Example~\ref{ex:causality}, minimal d-separators play a role in causality theory.
 In that context, \citet{Tian98findingminimal} prove that a minimal d-separator between two subsets of variables can be found with a polynomial algorithm in $O(\vert V \vert .\vert A \vert)$. 


\section{Markov blanket in a set}
\label{sec:markov_blanket_in_a_set}
We now introduce the notion of Markov blanket in a set.
\begin{de}\label{de:markov_blanket_in_a_set}
      Let $B$, $C$ and $E$ be three set of vertices in a graph $G = (V,A)$. The \emph{Markov blanket of $B$ in $C$ given E}, denoted by $\mb_C(B | E)$, is the smallest subset $M \subseteq C$ of vertices satisfying 
      \begin{equation}\label{eq:mbCdef}
      	X_B \indep X_{C\backslash (B \cup M)} | X_{M \cup E} \quad \text{for any distribution that factorizes on $G$,}
      \end{equation}
      where smallest means that a set $M\subseteq C$ satisfies \eqref{eq:mbCdef} if and only if $\mb_C(B|E) \subseteq M$.
\end{de}

\noindent Note that this definition holds both in directed and undirected graphical model. When $E= \emptyset$, we use the simpler notation $\mb_C(B)$.  The Markov blanket $\mb_C(B)$ coincides with $\mb(B)$ if $C = V$. 
Figure~\ref{fig:markovBlanket} illustrates the difference between the usual Markov blanket and the Markov blanket in a set. 

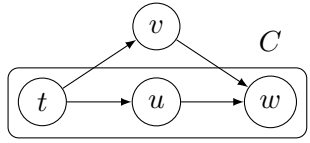
\begin{figure}\label{fig:markov_blanket_in_a_set}
      \begin{center}
            \begin{tikzpicture}
                  \def\l{1.5}
                  \def\h{1}
                  \node[sta] (t) at (0*\l,0*\h) {$t$};
                  \node[sta] (u) at (1*\l,0*\h) {$u$};
                  \node[sta] (v) at (1*\l,1*\h) {$v$};
                  \node[sta] (w) at (2*\l,0*\h) {$w$};
                  \draw[arc] (t) -- (u);
                  \draw[arc] (t) -- (v);
                  \draw[arc] (u) -- (w);
                  \draw[arc] (v) -- (w);
                  \node[draw, rounded corners, fit=(t.west) (t.north) (w.east) (w.south)] (C) {};
                  \node at (2*\l, 0.8) {$C$};
            \end{tikzpicture}
      \end{center}
      \caption{The Markov blanket of $t$ is $\{u,v\}$, and its Markov blanket in $C$ is $\{u,w\}$.}
      \label{fig:markovBlanket}
\end{figure}
The next theorem shows the existence and uniqueness of the Markov Blanket in a set and provides a graphical characterization in directed and undirected graphical models.
\begin{theo}\label{prop:mbInCdefinition}
      Let $B$, $C$ and $E$ be three sets of vertices in a graph $G = (V,A)$. The Markov blanket of $B$ in $C$ given $E$ exists, is unique, and equal to 
      \begin{equation}\label{eq:MB_def}
            \mb_C(B|E) = \Big\{v \in C \colon v\text{ is not (d-)separated from $B$ given } E \cup \big(C\backslash (B \cup \{v\})\big) \Big\},
      \end{equation}
      \noindent where ``d-separated'' and ``separated'' apply in directed and undirected graphical models respectively.
\end{theo}

\noindent  
The Markov blanket in a set no longer admits a characterization in terms of parents, coparents, children and neighbor vertices. 
However, thanks to the characterization~\ref{eq:MB_def}, $\mb_C(B|E)$ can be computed in $O\big(|C|(|A|+|V|)\big)$ using a (d-)separation algorithm (\citet{Geiger1990dseparation}).
\begin{proof}[Proof of Theorem~\ref{prop:mbInCdefinition}, undirected graphical models]
Let $B$, $C$, and $E$ be three sets of vertices, and $M$ as in \eqref{eq:MB_def}.

We start by proving that $B$ is separated from ${C\backslash (B \cup M)}$ given  $M\cup E$. 
Let $v$ be a vertex in $C \backslash (B \cup M)$, and $P$ be a $B$-$v$ path. 
As $v$ does not belong to $M$, path $P$ is not active given $E \cup \big(C\backslash (B \cup \{v\})\big)$, and there is a vertex in $E \cup \big(C\backslash (B \cup \{v\})\big)$ on $P\backslash \{v\}$. 
Let $w$ be the first vertex of $P$ in that set, starting from $B$. 
If $w$ is in $E$, path $P$ is not active given $E\cup M$. 
Otherwise, the $B$-$w$ restriction of $P$ is active given $E \cup \big(C\backslash (B \cup \{w\})\big)$. 
Vertex $w$ thus belongs to $M$ and $P$ is not active given $E\cup M$, which gives the result.

Let $N$ be a subset of $C$ such that $B$ is separated from $C \backslash (N \cup B)$ given $N\cup E$. Let $v$ be a vertex in $M$. 
By definition of $v$, there exists a $B$-v path that is active given $E \cup C \backslash(B \cup \{v\})$ with a minimum number of arcs. 
Let $P$ be such a path. 
The only intersection of $P$ with $E \cup C$ is \{v\}. 
Path $P$ is therefore not active given $E\cup N$ if and only if $v$ belongs to $N$. 
Hence $v$ belongs to $N$, and we obtain $M\subseteq N$.  
\end{proof}

The proof for directed graphical models is similar but more technical due to d-separation.

\begin{proof}[Proof of Theorem~\ref{prop:mbInCdefinition}, directed graphical models]

      Let $B$, $C$, and $E$ be three sets of vertices, and $M$ as in \eqref{eq:MB_def}.

      We start by proving that $B$ is d-separated from ${C\backslash (B \cup M)}$ given  $M\cup E$. Let $P$ be a trail between a vertex $b\in B$ and a vertex $v \in C\backslash (B \cup M)$.
      We prove that $P$ is not active.
      Without loss of generality, we can suppose that $P\cap B = \{b\}$.
      Indeed, if $P$ is active, then any of its subtrails whose extremities are not in $M$ must be active. 
      As $B \cap M = \emptyset$, it suffices to show that the subtrail $Q$ between the last  vertex of $P$ in $B$ (starting from $b$) is not active.
      If $P$ has a v-structure that is not active given $E \cup M$, or if $P$ has a vertex that is not a v-structure in $E \cup M$, then $P$ is not active. 
      Suppose now that we are not in one of those cases.
      Starting from $b$, let $w$ bet the first vertex of $P$ in $C$ 
      that is not the middle of a v-structure in $P$, and
      let $Q$ be the $b$-$w$ subtrail of $P$.
      By definition of $w$, any vertex of $Q$ that is not in the middle of a $v$-structure is not in $C$, and by hypothesis it is not in $E$, hence it is not in $E \cup \left( C\backslash(B\cup \{w\})\right)$. 
      Furthermore, by hypothesis, any v-structure of $Q$ is active given $E \cup M$.
      Suppose that $w$ is not in $M$: we obtain $M \subseteq E \cup (C\backslash(B\cup \{w\}))$, and hence, any v-structure of $Q$ is active given $E \cup (C\backslash(B\cup \{w\}))$. Therefore $Q$ is active given $E \cup (C\backslash(B\cup \{w\}))$ and $w \in M$, which is a contradiction. We deduce that $w \in M$.
      Hence $w \neq v$.
      As $w \in M$ is not in the middle of a v-structure, $P$ is not active given $M \cup E$, which gives the result.

      \begin{figure}\label{fig:proofDefinitionMbC}
            \begin{center}

                  \begin{tikzpicture}

                        \node[sta] (b ) at (0,1) {$b $};
                        \node[sta] (v1) at (1,2) {};
                        \node[sta] (u1) at (2,0) {$u_1$};
                        \node[sta] (v2) at (3,3) {};
                        \node[sta] (ui) at (4,2.3) {$u_i$};
                        \node[sta] (v3) at (5,3) {};
                        \node[sta] (uj) at (6,0) {$u_j$};
                        \node[sta] (v ) at (7,1) {$v $};
                        \node[sta] (w') at (4,0.9) {$w'$};
                        \node[sta] (w) at (4,-1.1) {$w$};
                        \node[sta] (a) at (-1, 0) {};
                        \node[sta] (z) at (8, 0) {};

                        \draw[dashed,arc,color=purple] (v1) -- (u1);
                        \draw[dashed,arc,color=purple] (v2) to node[midway,above left]{$P$} (u1);
                        \draw[dashed,arc,color=purple] (v2) -- (ui);
                        \draw[dashed,arc,color=purple] (v3) -- (ui);
                        \draw[dashed,arc,color=purple] (v3) -- (uj);
                        \draw[dashed,arc,color=purple] (v ) -- (uj);
                        \draw[dashed,arc,color=purple] (v1) -- (b );
                        \draw[dashed,arc] (ui) -- (w');
                        \draw[dashed,arc] (w') -- (w);


                        \node[draw, inner sep=2mm,label=below left:$C$,fit=(b .north) (w.south) (a.west) (z.east)] {};

                        \node[fill=blue, opacity=0.1, inner sep=2mm,fit=(w'.north) (u1.south) (u1.west) (v .east)] {};
                        \node[draw,inner sep=2mm,label=below right:$M$,fit=(w'.north) (u1.south) (u1.west) (v .east)] {};

                        \node[fill=red, opacity=0.1,inner sep=2mm,label=below:$B$,fit=(b .north) (a.south) (a.west) (b .east)] {};
                        \node[draw,inner sep=2mm,label=below:$B$,fit=(b .north) (a.south) (a.west) (b .east)] {};

                  \end{tikzpicture}
            \end{center}
      \caption{Illustration of the proof of Theorem~\ref{prop:mbInCdefinition}}
      \label{fig:proofDefinitionMbC}
      \end{figure}
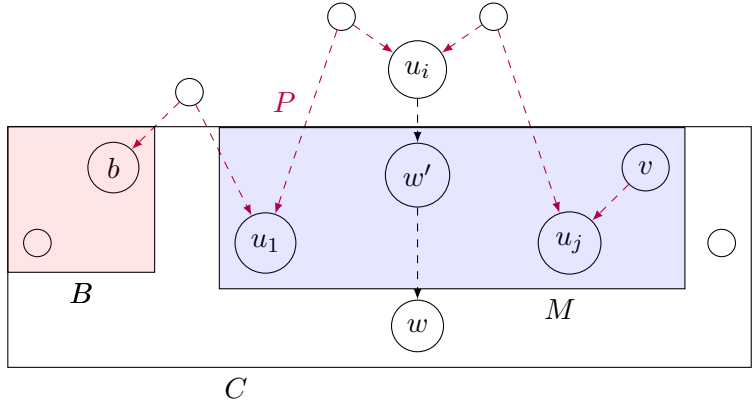

      Conversely, let $N\subseteq C$ be a set of vertices such that $B$ is d-separated from $C\backslash(N\cup B)$ given $N\cup E$.
      We now prove that $M \subseteq N$.
      This part of the proof is illustrated on Figure~\ref{fig:proofDefinitionMbC}.
      Let $v$ be a vertex in $M$. 
      As $v$ is in $M$, there is an active trail between $B$ and $v$ given $E \cup \big(C \backslash(\{v\}\cup B)\big)$. Let $P$ be such a trail.
      Without loss of generality, we can suppose $B \cap P = \{b\}$.
      As $P$ is active given $E \cup \big(C \backslash(\{v\}\cup B)\big)$ and $B \cap P = \{b\}$, any vertex of $P\backslash \{b,v\}$ that is not in the middle of a v-structure is not in $C \backslash(\{v\}\cup B)$, and hence not in $C$, and not in $N$.
      Starting from $b$, let $u_1,\ldots, u_k$ be an indexation of the vertices of $P$ that are in the middle of $v$-structures in $P$.
      We prove by iteration on $i$ that $\cdsc{u_i} \cap \big(E \cup N\big) \neq \emptyset$. Suppose the result true up to $i-1$, and $P_i$ be the subtrail of $P$ from $b$ to $u_i$.
      Suppose that $u_i$ is not in $\casc{E}$.
      As $P$ is an active trail given $E \cup \big(C \backslash(\{v\}\cup B)\big)$ and $u_i$ is in the middle of a v-structure, $u_i$ has a descendant $w$ in $C \backslash(\{v\}\cup B)$, and there is a directed path $Q$ from $u_i$ to $w$.
      Let $w'$ be the first vertex of $Q$ in $C\backslash (\{v\}\cup B)$ and $Q'$ the $u_i$-$w'$ restriction of $Q$. 
      Note that we may have $u_i = w$ or $u_i = w'$.
      Suppose that $w' \notin N$. It implies that $w' \in C \backslash (N \cup E)$. By induction hypothesis, the trail $P_i$ followed by Q' is active given $N \cup E$ between $B$ and $C \backslash (N \cup E)$. It contradicts Equation~\eqref{eq:mbCdef} for $N$. We deduce that $w' \in N$.
      Finally, as any vertex of $P\backslash\{b,v\}$ that is not in the middle of a v-structure is not in $N$, and $\cdsc{u} \cap N \neq \emptyset$ for any vertex $u$ of $P$ that is in the middle of a v-structure, the path $P$ is not active given $N$ only if $v \in N$.
      As $B$ is d-separated from $C\backslash(N\cup B)$ given $N$, we have $v\in N$, which gives the result, and the first part of the proposition.

      It is then an immediate corollary that any set $M \subseteq C\backslash B$ containing $\mb_C(B|E)$ satisfies Equation~\eqref{eq:mbCdef}.
\end{proof}

Theorem~\ref{prop:mbInCdefinition} ensures that $C' \perp B | C \cup E$ if and only if $\mb_{C\cup C'}(B|E) \subseteq C$. The following proposition strengthens this result.

\begin{prop}\label{prop:mbInCcupIndep} 
Let $B$, $C$, $C'$ and $E$ be four sets of vertices. Then $\mb_{C \cup C'}(B|E) = \mb_C(B|E)$ if and only if $C' \perp B | C \cup E$.
\end{prop}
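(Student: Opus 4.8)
The plan is to lean on the remark stated immediately before the proposition, namely that Theorem~\ref{prop:mbInCdefinition} already gives the equivalence $C'\perp B \mid C\cup E \iff \mb_{C\cup C'}(B|E)\subseteq C$. Granting this, the two implications of the proposition require very unequal amounts of work, and I would dispatch the easy one first.

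The forward implication is essentially free. Assuming $\mb_{C\cup C'}(B|E)=\mb_C(B|E)$, Definition~\ref{de:markov_blanket_in_a_set} gives $\mb_C(B|E)\subseteq C$, so the hypothesised equality forces $\mb_{C\cup C'}(B|E)\subseteq C$; the remark above then yields $C'\perp B \mid C\cup E$ directly.

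For the converse I would assume $C'\perp B \mid C\cup E$, obtain $M:=\mb_{C\cup C'}(B|E)\subseteq C$ from the same remark, and prove the equality $M=\mb_C(B|E)$ by two inclusions, in each case exhibiting one blanket as an admissible conditioning set for the other minimisation problem and invoking the minimality clause of Theorem~\ref{prop:mbInCdefinition} (a subset $N$ of the relevant ground set satisfies the defining independence if and only if it contains the corresponding blanket). For $\mb_C(B|E)\subseteq M$: since $M\subseteq C$ we have $C\backslash(B\cup M)\subseteq (C\cup C')\backslash(B\cup M)$, so decomposition of (d-)separation turns $X_B\indep X_{(C\cup C')\backslash(B\cup M)} \mid X_{M\cup E}$ into $X_B\indep X_{C\backslash(B\cup M)} \mid X_{M\cup E}$, making $M$ admissible for the $C$-problem. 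For the reverse inclusion, set $M_0:=\mb_C(B|E)$, $T_1:=C\backslash(B\cup M_0)$ and $T_2:=C'\backslash(B\cup C)$, so that $(C\cup C')\backslash(B\cup M_0)=T_1\cup T_2$. The definition of $M_0$ gives $X_B\indep X_{T_1} \mid X_{M_0\cup E}$, while $C'\perp B \mid C\cup E$ (with decomposition, and using that $C\cup E$ agrees with $M_0\cup T_1\cup E$ outside $B$) gives $X_B\indep X_{T_2} \mid X_{M_0\cup T_1\cup E}$; the contraction property of (d-)separation then produces $X_B\indep X_{T_1\cup T_2} \mid X_{M_0\cup E}$, making $M_0$ admissible for the $(C\cup C')$-problem and hence $M\subseteq M_0$.

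The delicate point, which I expect to be the main obstacle, is this reverse inclusion: one must certify that $\mb_C(B|E)$ also blocks $B$ from the freshly added vertices of $C'\backslash C$. One cannot argue by the naive monotonicity ``enlarging the conditioning set preserves separation'', because in directed models d-separation is \emph{not} monotone in the conditioning set (conditioning on a descendant of a collider can activate a trail). This is precisely why I would route the combination of the two independences through the contraction axiom, which is valid for both separation and d-separation, rather than through a direct trail manipulation. A secondary, purely bookkeeping, issue is the treatment of possible overlaps among $B$, $C$, $C'$ and $E$, which I would handle with the same conventions used in the proof of Theorem~\ref{prop:mbInCdefinition}.
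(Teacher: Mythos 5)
Your proof is correct, but it follows a genuinely different route from the paper's. The paper argues directly at the level of the graph: for the undirected case it matches active paths witnessing membership in $\mb_{C\cup C'}(B|E)$ with active paths witnessing membership in $\mb_C(B|E)$, and for the directed case it shows, by a careful case analysis on v-structures (locating the first v-structure whose activation status could change and deriving a contradiction with $B\perp C'\mid C\cup E$), that a $B$-$v$ trail is active given $\big((C\cup C')\backslash(B\cup\{v\})\big)\cup E$ if and only if it is active given $\big(C\backslash(B\cup\{v\})\big)\cup E$. You instead work axiomatically: the inclusion $\mb_C(B|E)\subseteq\mb_{C\cup C'}(B|E)$ follows from decomposition and the minimality clause of Theorem~\ref{prop:mbInCdefinition}, and the reverse inclusion from the contraction property applied to $X_B\indep X_{T_1}\mid X_{M_0\cup E}$ and $X_B\indep X_{T_2}\mid X_{M_0\cup T_1\cup E}$. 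This is sound (one checks that $T_1\cup T_2=(C\cup C')\backslash(B\cup M_0)$, as you implicitly do), it handles the directed and undirected cases in one stroke, and your diagnosis of why naive monotonicity fails for d-separation is exactly right --- contraction is the correct tool. What the paper's approach buys is self-containedness: everything reduces to Lemma-free trail manipulation within the formalism it has already set up, whereas your argument imports the contraction axiom for (d-)separation, which is standard (it is part of the graphoid properties of d-separation) but is neither stated nor proved anywhere in the paper, so you would need to cite or prove it. Two smaller points: your appeal to ``$C\cup E$ agrees with $M_0\cup T_1\cup E$ outside $B$'' silently assumes $\mb_C(B|E)\cap B=\emptyset$ and that conditioning sets can be modified inside $B$ without affecting separation from $B$, which is the overlap bookkeeping you flag and which the paper is equally informal about; and both of your uses of the minimality clause rely only on the direction ``$N$ satisfies the defining independence $\Rightarrow$ the blanket is contained in $N$,'' which is indeed the direction Theorem~\ref{prop:mbInCdefinition} establishes.
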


From Definition~\ref{de:markov_blanket_in_a_set}, it is clear that $\mb_{C \cup C'}(B|E) = \mb_C(B|E)$ implies $C' \perp B | C \cup E$, and that $C' \perp B | C \cup E$ implies $\mb_{C \cup C'}(B|E) \subseteq C$. So we only have to show that $C' \perp B | C \cup E$ implies $\mb_{C \cup C'}(B|E) = \mb_C(B|E)$.

\begin{proof}[Proof of Proposition \ref{prop:mbInCcupIndep} for undirected graphical models]
      Suppose that $C' \perp B | C \cup E$. Let $v \in \mb_{C \cup C'}(B|E)$, there exists an active path Q between $B$ and $v$ such that $Q \cap (C \cup C' \cup E) = \emptyset$. Therefore $Q \cap (C \cup E) = \emptyset$. If $v \in C'$, then the assumption $C' \perp B | C \cup E$ implies that $Q \cap (C \cup E) \neq \emptyset$, which contradicts our assumption. We deduce that $v \in C$ and $v$ is not separated from $B$ by $C \cup E$. Therefore, $v \in \mb_{C}(B|E)$.
      Let $u \in \mb_{C}(B|E)$, there exists a path Q from $B$ to $u$ such that $Q \cap (C \cup E) = \emptyset$. If $Q \cap C' \neq \emptyset$, the assumption $C' \perp B | C \cup E$ implies that $C \cup E$ intersects Q which contradicts our assumption on Q. Therefore, $Q \cap (C \cup C' \cup E) = \emptyset$. We deduce that $v \in \mb_{C \cup C'}(B|E)$.
      It achieves the proof.
\end{proof}

\begin{proof}[Proof of Proposition \ref{prop:mbInCcupIndep} for directed graphical models]

      Let $B$, $C$, and $C'$ be such that $C' \perp B | C \cup E$. 
      We only have to show that, given a vertex $v$ in $C$ and a $B$-$v$ trail $P$, then $P$ is active given $\big(C \cup C') \backslash (B \cup \{v\})\big) \cup E$ if and only if $P$ is active given $(C \backslash (B \cup \{v\})) \cup E$.
      Let $v$ be a vertex in $C$ and $P$ be a $B$-$v$ trail. 
      W.l.o.g., we suppose that it intersects $B$ at most once, and $v$ at most once.
      
      Suppose that $P$ is active given $(C \backslash (B \cup \{v\})) \cup E$.
      Then $P$ does not intersect $C'$. Indeed, suppose it intersects $C'$ in a vertex $w$.
      Then, the $B$-$w$ subtrail is active given $C \backslash (B \cup \{v\}) \cup E$, which contradicts $B \perp C' | C \cup E$.
      Furthermore, all the v-structures of $P$ are active given $\big(C \cup C') \backslash (B \cup \{v\})\big) \cup E$, as they have a descendant in $(C \backslash (B \cup \{v\})) \cup E$. Hence $P$ is active given $\big(C \cup C') \backslash (B \cup \{v\})\big) \cup E$. 


      Suppose now that $P$ is active given $\big(C \cup C') \backslash (B \cup \{v\})\big) \cup E$.
      It intersects $C \backslash (B \cup \{v\})) \cup E$ only on v-structures, and all these $v$-structures are active given $\big(C \cup C') \backslash (B \cup \{v\})\big) \cup E$. 
      Suppose that there is a v-structure that is not active given $(C \backslash (B \cup \{v\})) \cup E$, and let $s$ be the first one starting from $B$. Then $s$ has a descendant $w$ in $C' \backslash (C \cup E)$, and the $B$-$s$ subtrail of $P$ followed by the $s$-$w$ path is active given $C \cup E$, which contradicts $B\perp C' | C \cup E$. 
      Hence $P$ is active given $(C \cup C' \backslash (B \cup \{v\})) \cup E$.
\end{proof}

\section{Directional Markov blanket}
\label{sec:directional_markov_blanket}

We write ``a (d-)separator $S$'' when we make statement that hold both in directed and undirected graphical models. 
Set $S$ is a then a d-separator in directed graphical models, and a separator in undirected graphical models.

\begin{de}\label{de:MB_directional}
	Let $B$,$D$, and $E$ be three sets of vertices in a graph $G = (V,A)$. The \emph{Markov blanket of $B$ in the direction of $D$ given $E$}, denoted $\mb(B \rightarrow D | E)$, is the minimal (d-)separator $M$ of $B$ and $D$ such that 
	\begin{equation}\label{eq:nearestDseparator}
		D \perp M | M' \cup E \quad \text{for any (d-)separator $M'$ between $B$ and $D$ given $E$.}
	\end{equation}
\end{de}

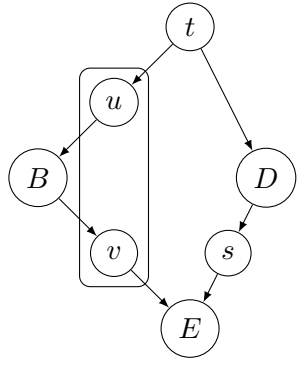
\begin{figure}
	\begin{center}
		\begin{tikzpicture}
			\node[sta] (B) at (0,2) {$B$};
			\node[sta] (v) at (1,1) {$v$};
			\node[sta] (E) at (2,0) {$E$};
			\node[sta] (s) at (2.5,1) {$s$};
			\node[sta] (D) at (3,2) {$D$};
			\node[sta] (u) at (1,3) {$u$};
			\node[sta] (t) at (2,4) {$t$};
			\draw[arc] (u) -- (B);
			\draw[arc] (t) -- (u);
			\draw[arc] (t) -- (D);
			\draw[arc] (B) -- (v);
			\draw[arc] (v) -- (E);
			\draw[arc] (D) -- (s);
			\draw[arc] (s) -- (E);
			\node[draw, rounded corners, fit=(u.west) (u.north) (u.east) (v.south)] (M) {};
			\node (M) at (0.3,0.2) {};
		\end{tikzpicture}
	\caption{Example of the directional Markov Blanket from $B$ to $D$ given an evidence set $E$. In this case $\mb(B\rightarrow D \vert E) = \{u, v\}$}
	\label{fig:example_MB_evidence}
	\end{center}
\end{figure}

\noindent Figure \ref{fig:example_MB_evidence} shows an example of the Markov Blanket of $B$ in the direction $D$ given an evidence $E$. Note that in this definition, the evidence set $E$ can be such that $E \cap B \neq \emptyset$.
\noindent The Markov blanket of $B$ in the direction of $D$ is the d-separator between $B$ and $D$ that is the nearest to $B$. Furthermore, the following proposition provides an alternative definition.
\begin{prop}\label{prop:equivalence_def_directionalMB}
	Let $B$,$D$, and $E$ be three sets of vertices in a graph $G=(V,A)$. Let $M$ be a minimal (d-)separator between $B$ and $D$ given $E$.

	$M$ satisfies \eqref{eq:nearestDseparator} if, and only if :
	\begin{equation}\label{eq:nearestDseparator_newversion}
		B \perp M' | M \cup E \quad \text{for any  minimal (d-)separator $M'$ between $B$ and $D$ given $E$.} 
	\end{equation}
\end{prop}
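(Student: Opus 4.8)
The plan is to reformulate both \eqref{eq:nearestDseparator} and \eqref{eq:nearestDseparator_newversion} as statements about which ``side'' of a separator a vertex lies on, and then to prove the two implications by one symmetric argument that merely exchanges the roles of $B$ and $D$ and of $M$ and $M'$. For a separator $S$ of $B$ and $D$ given $E$, write $R_B(S)$ (resp. $R_D(S)$) for the set of vertices joined to $B$ (resp. to $D$) by a path active given $S \cup E$; since $S \cup E$ separates $B$ and $D$, these sets are disjoint and neither meets $S \cup E$. In the undirected setting $D \perp M \mid M' \cup E$ is precisely $M \cap R_D(M') = \emptyset$, and $B \perp M' \mid M \cup E$ is precisely $M' \cap R_B(M) = \emptyset$. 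I would carry out the whole proof first with these reformulations, and only afterwards adapt it to d-separation.

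For the direct implication (\eqref{eq:nearestDseparator} $\Rightarrow$ \eqref{eq:nearestDseparator_newversion}) I would argue by contradiction. Fix a \emph{minimal} separator $M'$ and suppose $B \not\perp M' \mid M \cup E$, so some $m' \in M' \cap R_B(M)$ is witnessed by a $B$-$m'$ path active given $M \cup E$; in particular $m'$ sits strictly on the $B$-side of $M$. Minimality of $M'$ means $(M' \setminus \{m'\}) \cup E$ does not separate $B$ and $D$, so $m'$ lies on a $B$-$D$ path active given $(M' \setminus \{m'\}) \cup E$, whose $m'$-$D$ part $Q$ is active given $(M' \setminus \{m'\}) \cup E$. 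The key step is then purely topological: $Q$ runs from the $B$-side vertex $m'$ to $D$ on the $D$-side of $M$, so as a vertex sequence it must meet $M$; taking the \emph{first} vertex $m$ of $M$ on $Q$ when starting from $D$, the $D$-$m$ sub-path lies entirely on the $D$-side of $M$ except for $m$, hence it avoids $m'$ and therefore all of $M' \cup E$. This sub-path witnesses $D \not\perp M \mid M' \cup E$, contradicting \eqref{eq:nearestDseparator} applied to the separator $M'$.

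The converse is the mirror image, now invoking the minimality of $M$ itself. Given an arbitrary separator $M'$, I would first pass to a minimal separator $M'' \subseteq M'$; because undirected reachability is monotone in the conditioning set, $R_D(M') \subseteq R_D(M'')$, so it suffices to establish $D \perp M \mid M'' \cup E$. Assuming the contrary gives $m \in M \cap R_D(M'')$; minimality of $M$ yields a $B$-$m$ path active given $(M \setminus \{m\}) \cup E$, and its first crossing $m''$ of $M''$ starting from $B$ produces a $B$-$m''$ sub-path active given $M \cup E$, i.e. $B \not\perp M'' \mid M \cup E$, contradicting \eqref{eq:nearestDseparator_newversion} for the minimal separator $M''$. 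Note that neither direction concatenates trails: in each case one of the two witnessing paths is only used to certify on which side of a separator a single vertex lies.

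Finally I would transcribe both arguments to directed graphical models along the lines of the proof of Theorem~\ref{prop:mbInCdefinition}, and I expect this to be where essentially all the difficulty concentrates. Minimality of a d-separator $M'$ must be restated as: each $m' \in M'$ occurs as a \emph{non-collider} on some $B$-$D$ trail active given $(M' \setminus \{m'\}) \cup E$. The main obstacle is that the ``first crossing'' step no longer follows from connectivity: the $D$-sides of a d-separator are not connected components, so an $m'$-$D$ trail active given $(M'\setminus\{m'\})\cup E$ need not contain any vertex of $M$ at all, and separately the reduction ``$M'' \subseteq M' \Rightarrow R_D(M') \subseteq R_D(M'')$'' fails because conditioning on extra vertices can activate colliders. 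I would replace the crossing step by the direct trail construction used for Theorem~\ref{prop:mbInCdefinition} — following the trail, taking the first relevant vertex that is \emph{not} in the middle of a v-structure, and, by induction on the v-structures $u_1,\dots,u_k$, charging each active collider to a descendant lying in the conditioning set so that the trail rebuilt from $D$ (resp. from $B$) stays active given $M' \cup E$ (resp. $M \cup E$). Handling non-minimal separators without monotonicity, by carrying out this collider bookkeeping for a general $M'$ rather than reducing to a minimal subset, is the delicate part of the directed proof.
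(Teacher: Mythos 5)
Your undirected argument is sound and follows essentially the same concatenation strategy as the paper's proof; in fact your backward direction is more careful than the paper's, since you explicitly reduce an arbitrary separator $M'$ to a minimal $M''\subseteq M'$ and invoke the monotonicity $R_D(M')\subseteq R_D(M'')$ before applying \eqref{eq:nearestDseparator_newversion}, a step the paper's undirected proof glosses over with ``using the same arguments as above.''

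The genuine gap is the directed case, which the statement also covers and which you yourself identify as ``where essentially all the difficulty concentrates'' without resolving it. You correctly diagnose the two obstacles (the $D$-side of a d-separator is not a reachability component, and d-separation is not monotone in the conditioning set, so the reduction to a minimal $M''\subseteq M'$ fails), but ``carrying out the collider bookkeeping for a general $M'$'' is not an argument: the whole difficulty of the backward implication is to convert a violation of \eqref{eq:nearestDseparator} by an \emph{arbitrary} d-separator $M'$ into a violation of \eqref{eq:nearestDseparator_newversion} by some \emph{minimal} d-separator, and this requires actually exhibiting that minimal d-separator. The paper does so by setting $M''=M'\cap\casc{B\cup D\cup E}$ (still a d-separator by its Lemma~\ref{lem:IntersectionWithAscendantsIsStillDseparating}), constructing $M_1=\mb_{\mb_{M\cup M''}(B|E)}(D|E)$, proving that a witness $x\in M$ with $x\nperp D\mid M'\cup E$ satisfies $x\notin M_1$ (via Lemma~\ref{lem:elementOfMinimalIsDseparatedByBlanket} and a recursive descendant-chasing argument), and then deducing $B\nperp M_1\mid M\cup E$ from Lemma~\ref{lem:intermediate_lemma_dseparation}, Proposition~\ref{prop:mbInCcupIndep} and Corollary~\ref{coro:mbset_dsep_min}. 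None of this machinery, nor any substitute for it, appears in your proposal. Even your forward directed implication implicitly needs the ancestor-restriction fact that any $B$-$D$ trail inside $\casc{B\cup D\cup M\cup E}$ meets $M\cup E$ at a non-collider (the paper's Lemma~\ref{lem:BDtrailintersectDseparatingSet}); your remark about charging colliders to descendants gestures at this but is not carried out. As written, the proposal proves only the undirected half of the proposition.
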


\begin{proof}[Proof of Proposition \ref{prop:equivalence_def_directionalMB} in undirected graphical models]
	Let $M$ be a minimal d-separator.

	We start by proving \eqref{eq:nearestDseparator} implies \eqref{eq:nearestDseparator_newversion}. L
	Let $M'$ be a minimal separator between $B$ and $D$ given $E$, and let P be a path between $B$ and $x \in M'$, where  Since $M'$ is minimal, there exists a path $Q$ from $x$ to $D$ such that $Q \cap (M' \cup E) \backslash \{x\} = \emptyset$. The path $R$ composed of $P$ followed by $Q$ is a $B$-$D$ path. Since $M$ is a d-separator, there exists $v \in R \cap M$. If $v \in Q$, then \eqref{eq:nearestDseparator} implies that $Q \cap (M' \backslash \{x\}) \neq \emptyset$, which contradicts the assumption on $Q$. Therefore, $v \in P$. We deduce that all path from $B$ to $M'$ is intersected by $M \cup E$, which implies that $B \perp M' | M \cup E$.

	Suppose now that \eqref{eq:nearestDseparator_newversion} holds. Let $Q$ be a path from $u \in M$ to $D$ and $M'$ be a separator between $B$ and $D$ given $E$. Since $M$ is minimal, there exists a path $P$ from $B$ to $u$ such that $P \cap (M \backslash \{u\}) = \emptyset$. The path $R$ composed of $P$ followed by $Q$ is a $B$-$D$ path, there exists $v \in R \cap (M' \cup E)$. Using the same arguments as above, $v \in Q$, which implies that $x \perp D | M' \cup E$.
\end{proof}

The proof of Proposition~\ref{prop:equivalence_def_directionalMB} in directed graphical model is more involved and postponed to Section~\ref{sub:proof_of_alternative_definition}.
Similarly to the Markov Blanket in a set, we need to prove that $\mb(B \rightarrow D |E)$ in Definition \ref{de:MB_directional} exists. The following theorem states the existence and uniqueness of the Directional Markov Blanket.

\begin{theo}\label{theo:MB_directional_closed_formula}
Let $B$,$D$, and $E$ be three sets of vertices in a graph $G=(V,A)$. If there exists a (d-)separator between $B$ and $D$ given $E$, the Markov blanket of $B$ in the direction of $D$ given $E$ exists, is unique, and is given by
\begin{align*}
\mb(B\rightarrow D \vert E) &= \mb_{\mb(B|E)}(D | E) &&\text{in undirected graphical models, and by} \\
\mb(B\rightarrow D \vert E) &= \mb_{\mb_{\casc{B\cup D \cup E}}(B|E)}(D | E) && \text{in directed graphical models.}
\end{align*}
\end{theo}

The rest of the section is dedicated to the proofs of Proposition~\ref{prop:equivalence_def_directionalMB} in directed graphical models and of Theorem~\ref{theo:MB_directional_closed_formula}.

\begin{rem} 
Using Definition~\ref{de:MB_directional}, the Markov blanket of $B$ in the direction of $D$ given $E$ exists if and only if there exists a d-separator between $B$ and $D$ given $E$.
We can extend the definition of the Markov blanket of $B$ in the direction of $D$ given $E$ as the set $M$ of $V\backslash B$ satisfying 
\begin{enumerate}[label = (\roman*)]
            \item \label{item:sep} $B\perp D | M\cup E$, \hfill 
            \item \label{item:nearest} $B \perp D | M' \cup E$ implies $D \perp M | M'\cup E$,
            \item \label{item:smallest} $B \perp D | M' \cup E$ and $D \perp M' | M \cup E$ implies $M \subseteq M'$. 
\end{enumerate}	
It is immediate that the two definitions coincide when there exists a d-separator between $B$ and $D$ given $E$. 
But this alternative definition does not require the existence of a d-separator between $B$ and $D$. 
With this new definition, even without the existence of a d-separator, it follows from Theorem~\ref{theo:characterization_mbdirectional} in Section~\ref{sub:proof_of_the_characterization} that $\mb(B\rightarrow D | E)$ exists and admits the following updated characterization
\begin{align*}
\mb(B\rightarrow D \vert E) &= \depd{D} \cup \mb_{\mb(B|E)}(\indd{D} | E) &&\text{in undirected graphical models, and by} \\
\mb(B\rightarrow D \vert E) &= \depd{D} \cup  \mb_{\mb_{\casc{B\cup D \cup E}}(B|E)}(\indd{D} | E) && \text{in directed graphical models,}
\end{align*}
where 
$$\depd{D} = \left\{ \begin{array}{ll}
D \cap \mb(B|E) &\text{in undirected graphical models,} \\
D \cap \mb_{\casc{B \cup D \cup E}}(B|E)\enskip& \text{in directed graphical models,}
\end{array}\right.
$$
and $\indd{D} = D\backslash \depd{D}$.
\end{rem}

\subsection{Preliminary lemmas in directed graphical models}
\label{sub:proofs}

In this section we present some technical results on d-separators in directed graphical models. In the remaining of this section $B$, $D$ and $E$ denote three sets of vertices in a graph $G = (V,A)$.

\begin{lem}\label{lem:BDtrailintersectDseparatingSet}
Let $M$ be a d-separator between $B$ and $D$ given $E$. Then any $B$-$D$ trail in $\casc{B \cup D \cup M \cup E}$ intersects $M \cup E$ in a vertex $x$ that is not a v-structure.
\end{lem}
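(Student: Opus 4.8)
The plan is to prove the statement by strong induction on the number of \emph{blocked v-structures} of the trail, where I call a v-structure $v$ of a trail \emph{blocked} (given $M \cup E$) when $\cdsc{v} \cap (M \cup E) = \emptyset$. Since $M$ is a d-separator, $B \perp D \mid M \cup E$, so any $B$-$D$ trail $P$ is inactive given $M \cup E$. By the definition of activeness, inactivity means that $P$ has a non-v-structure vertex in $M \cup E$ \emph{or} a blocked v-structure. If $P$ has no blocked v-structure, inactivity forces a non-v-structure of $P$ into $M \cup E$, which is exactly the conclusion; this settles the base case and motivates inducting on the number of blocked v-structures among all $B$-$D$ trails contained in $\casc{S}$, where I write $S = B \cup D \cup M \cup E$.

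For the inductive step I take $P = (v_0,\dots,v_k)$ with $v_0 \in B$, $v_k \in D$, lying in $\casc{S}$ and having at least one blocked v-structure $v_i$. The conceptual crux is the following observation: since $v_i \in \casc{S}$ while $\cdsc{v_i} \cap (M \cup E) = \emptyset$, we must have $\cdsc{v_i} \cap (B \cup D) \neq \emptyset$, so there is a directed path $Q$ from $v_i$ down to some $w \in B \cup D$; because $Q \subseteq \cdsc{v_i}$, this path stays in $\casc{S}$ and avoids $M \cup E$ \emph{entirely}, the endpoint $w$ included. I then reroute $P$ through $Q$ so that $v_i$ ceases to be a collider. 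If $w \in D$, I take the initial segment $(v_0,\dots,v_i)$ of $P$ and append $Q$; at $v_i$ this reads $v_{i-1} \to v_i \to q_1$, a non-collider. If $w \in B$, I take the reverse of $Q$ as a trail from $w$ to $v_i$ and append the terminal segment $(v_i,\dots,v_k)$ of $P$; at $v_i$ this reads $v_{i+1} \to v_i \to q_1$, again a non-collider. In either case the resulting $B$-$D$ trail $P'$ lies in $\casc{S}$, the vertex $v_i$ is now a non-collider outside $M \cup E$, the newly used vertices of $Q$ are non-colliders outside $M \cup E$, and all vertices retained from $P$ keep their collider status; hence $P'$ has strictly fewer blocked v-structures than $P$.

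Applying the induction hypothesis to $P'$ yields a non-v-structure $x \in M \cup E$ of $P'$. Since $v_i \notin M \cup E$ and every vertex of $Q$ avoids $M \cup E$, the vertex $x$ must be one of the vertices retained from $P$; these have identical collider status in $P$ and $P'$, so $x$ is a non-v-structure of $P$ lying in $M \cup E$, which completes the induction.

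The main obstacle I anticipate is the bookkeeping guaranteeing that $P'$ is a legitimate trail and that the blocked-v-structure count genuinely drops: the directed path $Q$ may reintersect the retained portion of $P$, in which case I shortcut at the first such meeting and must check that this neither creates a new blocked collider nor leaves $\casc{S}$ — both follow because $Q$ is directed and contained in $\casc{S}\setminus(M\cup E)$. Everything else is driven by the single structural fact that a blocked v-structure inside the ancestral set $\casc{S}$ necessarily reaches $B \cup D$ by a directed path disjoint from $M \cup E$, which is precisely what lets me trade a blocked collider for an honest passage toward $B$ or $D$ without ever introducing a new vertex of $M \cup E$, so that the inductive witness always transfers back to $P$.
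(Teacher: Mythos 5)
Your proof is correct, and it rests on exactly the structural fact that drives the paper's argument: a v-structure $v$ of the trail that is blocked given $M\cup E$ (i.e.\ $\cdsc{v}\cap(M\cup E)=\emptyset$) but lies in $\casc{B\cup D\cup M\cup E}$ must admit a directed path to $B\cup D$ that avoids $M\cup E$, through which the trail can be rerouted so that $v$ becomes a non-collider. The difference is architectural. The paper resolves all blocked colliders in one shot: it picks two extremal pivots (the last blocked v-structure that is an ascendant of $B$, then the first blocked v-structure after it, which must reach $D$), splices in the two escape paths, and exhibits a single active $B$-$D$ trail contradicting d-separation. You instead peel off one blocked collider at a time by induction on their number, with the base case handed to you directly by the definition of inactivity, and then transport the witness vertex back from $P'$ to $P$ using the facts that the spliced path and the rerouted collider avoid $M\cup E$ and that retained vertices keep their collider status. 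Your route avoids the somewhat delicate pivot selection of the paper (whose correctness depends on checking that the middle segment between the two pivots contains no further blocked colliders), at the price of an induction and the reintersection bookkeeping you correctly flag; note also that the containment of the escape path $Q$ in $\casc{B\cup D\cup M\cup E}$ follows from its endpoint lying in $B\cup D$ (every vertex of $Q$ is an ascendant of that endpoint) rather than from $Q\subseteq\cdsc{v_i}$ alone, which only gives disjointness from $M\cup E$ --- a harmless slip, since the needed conclusion holds. Both proofs are sound; yours is arguably the more mechanical and easier to verify, the paper's the shorter.
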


\begin{proof} 
Let $P$ be a $B$-$D$ trail in $\casc{B \cup D \cup M \cup E}$. Starting from $B$, let $v$  be the last v-structure of $P$ that is not active given $M \cup E$ and that is in $\asc{B}$, with $v$ being equal to the first vertex of $P$ if there is no such v-structure.
Starting from $v$, 
let $w$ be equal to the first v-structure of the $v$-$d$ subpath of $P$ that is not active given $M \cup E$, and to the last vertex of $P$ if there is no such v-structure. By definition of $v$, vertex $w$ has necessarily a descendant in $D$. Taking a $B$-$w$ path followed by the $v$-$w$ subtrail of $P$ and then a $w$-$D$ path, we obtain an active trail given $M \cup E$, which gives a contradiction. 
\end{proof}

\begin{lem}\label{lem:addingVerticesDseparator}
Let $M$ be a d-separator between $B$ and $D$ given $E$, and $N \subseteq \casc{B \cup D \cup M \cup E}$. Then $M \cup N$ is a d-separator between $B$ and $D$ given $E$.  
\end{lem}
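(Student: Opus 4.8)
The plan is to argue by contradiction, using Lemma~\ref{lem:BDtrailintersectDseparatingSet} as the engine. Assume $M \cup N$ is \emph{not} a d-separator between $B$ and $D$ given $E$; then there is a $B$-$D$ trail $P$ that is active given $(M\cup N)\cup E$, and the goal is to show that $P$ must in fact be blocked, yielding a contradiction. The first, easy, observation is that enlarging $M$ by $N$ does not enlarge the relevant ancestral set: since $N \subseteq \casc{B\cup D\cup M\cup E}$, every ancestor of a vertex of $N$ is already an ancestor of $B\cup D\cup M\cup E$, so $\casc{N}\subseteq\casc{B\cup D\cup M\cup E}$ and therefore
\[
\casc{B\cup D\cup M\cup N\cup E}=\casc{B\cup D\cup M\cup E}.
\]

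The second step, which I expect to be the main obstacle, is to show that \emph{every vertex of the active trail $P$ lies in} $\casc{B\cup D\cup(M\cup N)\cup E}$, hence in $\casc{B\cup D\cup M\cup E}$ by the identity above. This is the standard ``active trails stay in the ancestral set'' fact, and it needs a short positional argument rather than a calculation. For a vertex of $P$ that is a v-structure, activity forces it to have a descendant in $(M\cup N)\cup E$, so it belongs to $\casc{(M\cup N)\cup E}$. For a vertex that is not a v-structure, at least one of its two incident edges on $P$ points away from it; following that outgoing direction along $P$ yields a directed subpath that terminates either at an endpoint in $B\cup D$ or at a v-structure of $P$, so the vertex is an ancestor of $B\cup D$ or of the conditioning set. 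In every case the vertex lies in $\casc{B\cup D\cup(M\cup N)\cup E}$.

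Finally, $P$ is then a $B$-$D$ trail entirely contained in $\casc{B\cup D\cup M\cup E}$, and $M$ is by hypothesis a d-separator between $B$ and $D$ given $E$, so Lemma~\ref{lem:BDtrailintersectDseparatingSet} applies and guarantees that $P$ meets $M\cup E$ in a vertex $x$ that is \emph{not} a v-structure of $P$. But then $x$ is a non-v-structure vertex of $P$ lying in $M\cup E\subseteq(M\cup N)\cup E$, which blocks $P$ given $(M\cup N)\cup E$ and contradicts the activity of $P$. Hence no such active trail exists, and $M\cup N$ d-separates $B$ and $D$ given $E$.

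The only delicate point is the middle step; everything else is bookkeeping. It is precisely this step that makes Lemma~\ref{lem:BDtrailintersectDseparatingSet} usable, since that lemma converts ``$M$ blocks the trail somewhere'' into the stronger ``$M\cup E$ blocks it at a non-v-structure'', which is exactly what is needed to conclude blocking under the larger conditioning set $(M\cup N)\cup E$. I would note in passing that the same conclusion can be reached by an explicit rerouting of $P$ through a directed path from an offending v-structure to $B\cup D$, but routing the trail to the correct endpoint while controlling every intermediate v-structure is more cumbersome than invoking the ancestral-set restriction.
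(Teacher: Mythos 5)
Your proof is correct and follows essentially the same route as the paper's: assume an active $B$-$D$ trail given $(M\cup N)\cup E$, show it lies in $\casc{B\cup D\cup M\cup E}$, and invoke Lemma~\ref{lem:BDtrailintersectDseparatingSet} to find a non-v-structure intersection with $M\cup E$ that blocks the trail. Your treatment of the middle step is in fact slightly more careful than the paper's, which only justifies the ancestral-set containment for the v-structures of the trail and leaves the non-v-structure vertices implicit.
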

\begin{proof}
Suppose that there exists an active trail between $B$ and $D$ given $M \cup E \cup N$. Let $P$ be such a trail. Since $N \in \casc{B \cup D \cup M \cup E}$, we deduce that $P$ is a trail in $\casc{B \cup D \cup M \cup E}$ because all v-structures have a descendant in $M \cup E \cup N$ and $N \subset \casc{B \cup D \cup M \cup E}$. Lemma \ref{lem:BDtrailintersectDseparatingSet} ensures that $P$ intersects $M \cup E$ in a vertex that is not a v-structure. It contradicts the assumption on $P$.
\end{proof}

The following lemma is an extension of Theorem 6 of \citet{Tian98findingminimal} where we allow an evidence $E$.

\begin{lem}\label{lem:IntersectionWithAscendantsIsStillDseparating}
If $M$ is a d-separator between $B$ and $D$ given $E$, then $M \cap \casc{B \cup D \cup E}$ is also a d-separator between $B$ and $D$ given $E$.
\end{lem}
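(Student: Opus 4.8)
The goal is to show that restricting a d-separator $M$ to its intersection with the ancestral set $\casc{B \cup D \cup E}$ preserves the d-separating property. Let me think about what could go wrong and how to control it.

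First, the setup: I have $M$ d-separating $B$ and $D$ given $E$. I set $M' = M \cap \casc{B \cup D \cup E}$. I want to show $M'$ still d-separates $B$ and $D$ given $E$. The plan is to argue by contradiction: suppose there is a $B$-$D$ trail $P$ that is active given $M' \cup E$. I must derive from $P$ an active trail given the original $M \cup E$, contradicting that $M$ is a d-separator.

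The natural approach is to localize the failure. Since $P$ is active given $M' \cup E$ but (by assumption on $M$) not active given $M \cup E$, the discrepancy comes from vertices of $M \setminus M'$, i.e. vertices of $M$ that are NOT in $\casc{B \cup D \cup E}$. A vertex $x$ of $M$ can change the status of $P$ in two ways: either $x$ sits on $P$ as a non-v-structure (blocking when conditioned on, so removing it from the conditioning set can unblock), or $x$ lies in $\cdsc{s}$ for some v-structure $s$ of $P$ (activating that v-structure when conditioned on, so removing it can deactivate). The key observation I want to exploit is that a vertex $x \notin \casc{B \cup D \cup E}$ has no descendant in $B \cup D \cup E$, hence in particular the first failure mode is benign and the second mode, when it occurs, forces the activating descendant to lie outside the ancestral set. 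The plan is to show that I can reroute or shorten $P$ so that it avoids all vertices of $M \setminus M'$, while staying active given $M \cup E$.

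Concretely, I would take $P$ active given $M' \cup E$ and examine where it interacts with $M \setminus M'$. Since $P$ is a $B$-$D$ trail and its endpoints lie in $B \cup D \subseteq \casc{B \cup D \cup E}$, I expect to pass to the subtrail lying in the ancestral set, or to invoke Lemma~\ref{lem:BDtrailintersectDseparatingSet} after checking $P \subseteq \casc{B \cup D \cup M \cup E}$. The cleanest route is probably: show $P$ can be taken inside $\casc{B \cup D \cup E}$, so that every v-structure of $P$ activated under $M \cup E$ must be activated by a descendant that is itself an ancestor of $B \cup D \cup E$, hence by a vertex of $M \cap \casc{B \cup D \cup E} = M'$. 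Then activity given $M' \cup E$ and activity given $M \cup E$ coincide on such a trail, producing the contradiction directly. The delicate point is justifying that we may restrict attention to a trail contained in $\casc{B \cup D \cup E}$: a v-structure's activating descendant need not be ancestral, but I would argue that if a v-structure is active under $M' \cup E$ its activating vertex lies in $M' \cup E \subseteq \casc{B \cup D \cup E}$, giving a directed path into the ancestral set and letting me keep the whole configuration ancestral.

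The main obstacle I anticipate is exactly this containment argument for v-structures: I must ensure that when I reason about a trail active given $M' \cup E$, every v-structure on it is activated by a descendant that belongs to $M'$ (not merely to $M$), and that the directed activating paths stay within $\casc{B \cup D \cup E}$. Once this is established, Lemma~\ref{lem:BDtrailintersectDseparatingSet} applied to $M'$ — after verifying that the trail genuinely lives in $\casc{B \cup D \cup M' \cup E}$ — should force an intersection with $M' \cup E$ at a non-v-structure, contradicting activity. Handling the interplay between the two conditioning sets $M' \cup E$ and $M \cup E$ on the v-structures, and confirming that no vertex of $M \setminus M'$ can secretly re-block or re-activate once we are inside the ancestral set, is where the care is needed.
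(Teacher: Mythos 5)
Your plan follows essentially the same route as the paper's proof: both hinge on the observation that a $B$-$D$ trail active given $M' \cup E$, where $M' = M \cap \casc{B \cup D \cup E}$, must lie entirely in $\casc{B \cup D \cup M' \cup E} = \casc{B \cup D \cup E}$ (every active v-structure has a descendant in $M' \cup E$, and every non-v-structure vertex of a trail is an ancestor of an endpoint or of such a v-structure), after which $M$ and $M'$ behave interchangeably on the trail. Your direct conclusion --- that on an ancestral trail, activity given $M' \cup E$ implies activity given $M \cup E$, since no vertex of $M \setminus M' = M \setminus \casc{B \cup D \cup E}$ can sit on the trail as a non-v-structure and the smaller set $M'$ activates no v-structure that $M$ does not --- is correct and in fact slightly more elementary than the paper's, which instead invokes Lemma~\ref{lem:BDtrailintersectDseparatingSet}.

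One slip to fix: in your closing paragraph you propose to apply Lemma~\ref{lem:BDtrailintersectDseparatingSet} \emph{to $M'$}. That is circular: the hypothesis of that lemma is that the conditioning set is a d-separator, which for $M'$ is precisely what you are trying to prove. The lemma must be applied to the original d-separator $M$ (the trail lies in $\casc{B \cup D \cup E} \subseteq \casc{B \cup D \cup M \cup E}$, so its hypothesis is satisfied); it then yields a non-v-structure vertex of the trail in $M \cup E$, and ancestrality of the trail forces that vertex into $M' \cup E$, blocking the trail. Either this corrected invocation or your direct ``coincidence'' argument closes the proof; as written, the last step does not.
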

\begin{proof}
Any trail that intersects $V \backslash \casc{B \cup D \cup E}$ is not active given $(M \cap \casc{B \cup D \cup E}) \cup E$.
And by Lemma~\ref{lem:BDtrailintersectDseparatingSet}, any trail in $\casc{B \cup D \cup E}$ intersects $(M \cap \casc{B \cup D \cup E}) \cup E$ on a non v-structure, which gives the result.
\end{proof}

\begin{coro}\label{coro:existanceOfADSeparatingSubset}
Let $M$ be a set of vertices. Then there exists a subset of $M$ that d-separates $B$ and $D$ given $E$ if and only if 
$$ B \perp D \big| (M \cap \casc{B\cup D \cup E}) \cup E$$

\end{coro}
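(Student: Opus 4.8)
The plan is to prove the two implications separately, relying entirely on the three preliminary lemmas, since the corollary is essentially a packaging of Lemmas~\ref{lem:addingVerticesDseparator} and~\ref{lem:IntersectionWithAscendantsIsStillDseparating}.

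The reverse implication is immediate: if $B \perp D \mid (M \cap \casc{B\cup D \cup E}) \cup E$, then by definition the set $M \cap \casc{B\cup D \cup E}$ d-separates $B$ and $D$ given $E$, and since $M \cap \casc{B\cup D \cup E} \subseteq M$, this exhibits the desired subset of $M$. No lemma is even needed here.

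For the forward implication, I would start from an arbitrary subset $S \subseteq M$ that d-separates $B$ and $D$ given $E$, and aim to show that the specific set $M \cap \casc{B\cup D \cup E}$ is itself a d-separator. First I would apply Lemma~\ref{lem:IntersectionWithAscendantsIsStillDseparating} to $S$ to obtain that $S \cap \casc{B\cup D \cup E}$ is still a d-separator between $B$ and $D$ given $E$; this replaces $S$ by a d-separator contained in $\casc{B\cup D \cup E}$. Then I would enlarge this set to $M \cap \casc{B\cup D \cup E}$ by invoking Lemma~\ref{lem:addingVerticesDseparator} with the d-separator $S \cap \casc{B\cup D \cup E}$ and the added set $N = (M \cap \casc{B\cup D \cup E}) \setminus (S \cap \casc{B\cup D \cup E})$. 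The hypothesis of that lemma requires $N \subseteq \casc{B \cup D \cup (S\cap\casc{B\cup D\cup E}) \cup E}$, which holds trivially because $N \subseteq \casc{B\cup D \cup E}$. Using $S \subseteq M$, one checks that $(S \cap \casc{B\cup D \cup E}) \cup N = M \cap \casc{B\cup D \cup E}$, so Lemma~\ref{lem:addingVerticesDseparator} yields that $M \cap \casc{B\cup D \cup E}$ d-separates $B$ and $D$ given $E$, i.e. $B \perp D \mid (M \cap \casc{B\cup D \cup E}) \cup E$.

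I do not anticipate a genuine obstacle, as both lemmas apply directly; the only point requiring care is the bookkeeping of the set identity $(S \cap \casc{B\cup D \cup E}) \cup N = M \cap \casc{B\cup D \cup E}$, which follows from $S \subseteq M$, and the verification that the added vertices $N$ lie in the ascendant cone so that Lemma~\ref{lem:addingVerticesDseparator} is applicable. The same argument works verbatim for undirected graphical models, where the ascendant cone $\casc{\cdot}$ plays no restrictive role.
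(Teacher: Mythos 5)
Your proof is correct and is exactly the argument the paper intends: the paper's proof is simply ``an immediate corollary of the two previous lemmas,'' and your forward direction spells out precisely that combination (Lemma~\ref{lem:IntersectionWithAscendantsIsStillDseparating} applied to the given subset $S$, then Lemma~\ref{lem:addingVerticesDseparator} to enlarge $S\cap\casc{B\cup D\cup E}$ to $M\cap\casc{B\cup D\cup E}$), while the reverse direction is trivial as you note. The only caveat is your closing remark about undirected models: the corollary and the supporting lemmas are stated only for directed graphical models, so that aside is unnecessary, but it does not affect the correctness of the argument.
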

\begin{proof}
An immediate corollary of the two previous lemmas.
\end{proof}

\begin{lem}\label{lem:allVerticesDseparatedByM}
Let $M$ be a d-separator between $B$ and $D$ given $E$, and $x \in \casc{B \cup D \cup M \cup E}$. Then at least one of the following statement is true: $x \perp B | M \cup E$ or $x \perp D | M \cup E$.
\end{lem}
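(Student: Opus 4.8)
The plan is to argue by contradiction: assuming that neither $x \perp B | M\cup E$ nor $x \perp D | M\cup E$ holds, I would exhibit an active $B$-$D$ trail given $M\cup E$, contradicting that $M$ d-separates $B$ and $D$. First I dispose of the case $x\in M\cup E$, which is trivial: then $X_x$ is determined by $X_{M\cup E}$, so both independences hold. Hence I may assume $x\notin M\cup E$, and I fix an active $B$-$x$ trail $\alpha$ and an active $x$-$D$ trail $\beta$ given $M\cup E$, whose existence is precisely the negation of the two independences.

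Consider the concatenation $P=\alpha+\beta$, a $B$-$D$ trail through $x$. Every internal vertex of $P$ other than $x$ inherits its activity status from $\alpha$ or $\beta$, so the only vertex at which $P$ can fail to be active given $M\cup E$ is $x$. Since $M$ d-separates $B$ and $D$, the trail $P$ is not active, so $x$ must block it. As $x\notin M\cup E$, a non-v-structure at $x$ would not block; therefore $x$ is a v-structure in $P$ and, moreover, $\cdsc{x}\cap(M\cup E)=\emptyset$. (As in the earlier concatenation arguments of this section, I first arrange that $P$ is a genuine trail, rerouting through shortcuts should $\alpha$ and $\beta$ meet outside $x$; checking that such a shortcut preserves activity and the orientation of the edges incident to $x$ is the only delicate point, and I expect this bookkeeping to be the main obstacle.)

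Next I use $x\in\casc{B\cup D\cup M\cup E}$ together with $\cdsc{x}\cap(M\cup E)=\emptyset$: since $x\notin M\cup E$, these force $\cdsc{x}\cap(B\cup D)\neq\emptyset$, so there is a directed path $R$ from $x$ to a first vertex $t\in B\cup D$. Its interior vertices lie in $\dsc{x}$ and therefore avoid $M\cup E$, so $R$ is active given $M\cup E$, and being directed its first edge points out of $x$. If $t\in D$, I replace $\beta$ by $R$: in $\alpha+R$ the vertex $x$ carries an out-edge on the $R$-side, hence is a non-v-structure, and since $x\notin M\cup E$ it does not block; thus $\alpha+R$ is an active $B$-$D$ trail given $M\cup E$, contradicting that $M$ d-separates $B$ and $D$. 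The case $t\in B$ is symmetric, replacing $\alpha$ by the reverse of $R$. This contradiction establishes the lemma.
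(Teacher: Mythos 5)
Your proof is correct and begins exactly as the paper's does: negate both independences, pick active trails $\alpha$ (from $B$ to $x$) and $\beta$ (from $x$ to $D$), and concatenate them. Where you diverge is in how the contradiction is extracted. The paper observes that $\alpha+\beta$ stays inside $\casc{B\cup D\cup M\cup E}$ and meets $M\cup E$ only on v-structures, and then simply invokes Lemma~\ref{lem:BDtrailintersectDseparatingSet}; you instead give a self-contained argument that isolates $x$ as the only possible blocking vertex, deduces that it must be a v-structure with $\cdsc{x}\cap(M\cup E)=\emptyset$, and then uses the hypothesis $x\in\casc{B\cup D\cup M\cup E}$ to produce a descendant of $x$ in $B\cup D$ through which the blocked v-structure is bypassed. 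This in effect re-proves, localized at the single vertex $x$, the rerouting step that the paper has already packaged into Lemma~\ref{lem:BDtrailintersectDseparatingSet}; citing that lemma would shorten your argument, but your version is more elementary and makes completely explicit where the hypothesis $x\in\casc{B\cup D\cup M\cup E}$ is used. Two small remarks. First, the ``delicate rerouting'' you worry about when $\alpha$ and $\beta$ meet outside $x$ is a non-issue under the paper's conventions: trails are sequences of vertices that need not be simple, each occurrence of a repeated vertex keeps its own v-structure status, and the paper's own proofs concatenate trails freely, so the raw concatenation suffices. Second, in the case $x\in M\cup E$ the clean justification is graph-theoretic rather than probabilistic: an endpoint of a trail is never a v-structure, so any trail ending in $M\cup E$ is blocked and both d-separations hold by definition.
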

\begin{proof}
Suppose that none of the independences are satisfied. Then $x\notin M$, and there is a $B$-$x$  trail $Q$ that is active given $M \cup E$, and an $x$-$D$ trail $R$ that is active given $M \cup E$. As $x \in \casc{B\cup D \cup M \cup E}$, if trails $Q$ and $R$ intersect $V \backslash(\casc{B\cup D \cup M \cup E})$, they are not active given $M \cup E$. As $x \notin M \cup E$, the trail composed of $Q$ followed by $R$ is a $B$-$D$ trail that intersects $M \cup E$ only on v-structures. This contradicts Lemma~\ref{lem:BDtrailintersectDseparatingSet}, and gives the result.
\end{proof}

\subsection{Proof of Theorem~\ref{theo:MB_directional_closed_formula}}
\label{sub:proof_of_the_characterization}

In this section we prove Theorem~\ref{theo:MB_directional_closed_formula}.

\begin{lem}\label{lem:mbset_dsep} 
Let $M$ be a (d-)separator between $B$ and $D$ given $E$, then $\mb_{M}(B|E)$ is a (d-)sepa\-rator between $B$ and $D$ given $E$.
\end{lem}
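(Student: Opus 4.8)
The plan is to set $N := \mb_M(B|E)$ and prove directly that $N \cup E$ (d-)separates $B$ and $D$, i.e. that $B \perp D \mid N \cup E$. I have two ingredients. The first is the hypothesis that $M$ is a (d-)separator, namely $B \perp D \mid M \cup E$. The second is the defining property of the Markov blanket in a set, supplied by Theorem~\ref{prop:mbInCdefinition}: $N \subseteq M$ and $B \perp M \backslash (B \cup N) \mid N \cup E$. Intuitively $N$ collects the part of $M$ lying ``on the $B$ side'' and shields $B$ from the rest of $M$; combined with the fact that all of $M$ shields $B$ from $D$, this should force $N$ alone to shield $B$ from $D$.

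I would argue by contradiction. Suppose there is an active $B$-$D$ path (trail) $P$ given $N \cup E$, with $B$-endpoint $b$ and $D$-endpoint $d$; since endpoints are never v-structures and $P$ is active, $b,d \notin N \cup E$. Because $M \cup E$ (d-)separates $B$ and $D$ while $N \cup E$ does not block $P$, and $N \cup E \subseteq M \cup E$, the larger set must block $P$ at a vertex that $N \cup E$ fails to provide. In the undirected case this is immediate: $P$ avoids $N \cup E$ but must meet $M \cup E$, hence meets $M \backslash (N \cup E)$. In the directed case I first record the monotonicity observation that enlarging the conditioning set from $N \cup E$ to $M \cup E$ can only \emph{activate} v-structures (each v-structure of $P$ already has a descendant in $N \cup E \subseteq M \cup E$), so the only way $P$ can become blocked is that some \emph{non}-v-structure of $P$ lies in $M \backslash (N \cup E)$. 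Either way, let $w$ be the first such vertex of $P$ starting from $b$ and let $P'$ be the $b$-$w$ subpath/subtrail; one checks $P'$ stays active given $N \cup E$, since its interior vertices keep their type and the usual activity conditions, and its endpoints $b,w$ avoid $N \cup E$.

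It then remains to see that $w \notin B$, for then $w \in M \backslash (B \cup N)$ and $P'$ is an active path from $B$ to $M \backslash (B \cup N)$ given $N \cup E$, contradicting $B \perp M \backslash (B \cup N) \mid N \cup E$. This is the step I expect to be the only delicate one, and it is a pure bookkeeping point about $B \cap M$: if $w$ were in $B$ then $w \in B \cap M$ with $w \notin E$, and applying the characterization \eqref{eq:MB_def} to the trivial one-vertex trail at $w$ shows that $w$ is not (d-)separated from $B$ given $E \cup (M \backslash (B \cup \{w\}))$, whence $w \in \mb_M(B|E) = N$, contradicting $w \in M \backslash (N \cup E)$. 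Thus $w \notin B$, the contradiction is reached, and $N \cup E$ (d-)separates $B$ and $D$. The main obstacle is therefore this handling of $B \cap M$ together with, in the directed case, confirming that the newly conditioned vertices of $M \backslash N$ can only turn v-structures on, so that the block necessarily occurs at a non-v-structure where the subtrail argument applies.
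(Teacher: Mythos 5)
Your proposal is correct and follows essentially the same route as the paper's proof: take a trail active given $\mb_M(B|E)\cup E$, observe that enlarging the conditioning set to $M\cup E$ can only activate v-structures, locate the first non-v-structure of the trail lying in $M\cup E$, and use the subtrail from $B$ up to that vertex to reach a contradiction. The only cosmetic difference is that you close the argument with the independence $B \perp M\backslash(B\cup N) \mid N\cup E$ supplied by Definition~\ref{de:markov_blanket_in_a_set}, whereas the paper uses the characterization \eqref{eq:MB_def} to conclude that this first vertex must itself lie in $\mb_M(B|E)$ and hence blocks the trail; your explicit handling of the edge case $w\in B\cap M$ addresses a point the paper silently glosses over.
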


\begin{proof}[Proof of Lemma \ref{lem:mbset_dsep} in undirected graphical models]
      Consider a path Q from $B$ to $D$. Since $B \perp D | M \cup E$, we have $Q \cap (M \cup E) \neq \emptyset$. Starting from $B$, consider the first vertex $x$ of $M \cup E$ on the path Q. 
      By Theorem~\ref{prop:mbInCdefinition}, $x \in \mb_{M}(B|E)$. It implies that $Q \cap \mb_{M}(B|E) \neq \emptyset$. We conclude that $B$ and $D$ are separated by $\mb_{M}(B|E) \cup E$.
\end{proof}

\begin{proof}[Proof of Lemma \ref{lem:mbset_dsep} in directed graphical models]
      Suppose that $B \nperp D | \mb_{M}(B|E) \cup E$. 
      Let $P$ be a trail between $B$ and $D$ that is active given $\mb_{M}(B|E) \cup E$.
      Since $\mb_{M}(B|E) \cup E \subseteq M \cup$, all the v-structures of $P$ are active given $M\cup E$.
      Since $P$ is not active given $M \cup E$, there exists at least one element in $(M \cup E) \cap P$, which is not in a v-structure of $P$. Starting from $B$, consider the first element $x$ on P such that $x \in (M \backslash \{x\})  \cup E$. The subtrail of $P$ from $B$ to $x$ is active given $(M \backslash \{x\}) \cup E$. Therefore, $x \in \mb_{M}(B|E)$, which contradicts our assumption on P. 
\end{proof}

\begin{coro}\label{coro:mbset_dsep_min}
      Let $M$ be a minimal (d-)separator between $B$ and $D$, then $\mb_{M}(B|E) = M$.
\end{coro}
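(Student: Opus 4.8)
The plan is to derive the corollary immediately from Lemma~\ref{lem:mbset_dsep} combined with the containment that is built into the definition of $\mb_M(B|E)$. First I would note that, by Definition~\ref{de:markov_blanket_in_a_set} (equivalently, by the closed form~\eqref{eq:MB_def} of Theorem~\ref{prop:mbInCdefinition}), the Markov blanket of $B$ in the set $M$ given $E$ is by construction a subset of its container set, so $\mb_M(B|E) \subseteq M$ with nothing to prove. Second, Lemma~\ref{lem:mbset_dsep} tells me that this subset is again a (d-)separator of $B$ and $D$ given $E$, i.e.\ $B \perp D \,|\, \mb_M(B|E) \cup E$.

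The closing step is to play these two facts against the minimality of $M$. By the definition of a minimal (d-)separator between $B$ and $D$ given $E$, no strict subset $M' \subsetneq M$ satisfies $B \perp D \,|\, M' \cup E$. Since $\mb_M(B|E)$ is a subset of $M$ that does (d-)separate $B$ and $D$ given $E$, it cannot be a proper subset, and therefore $\mb_M(B|E) = M$. The same wording covers the directed and undirected cases at once, since both Lemma~\ref{lem:mbset_dsep} and the notion of minimality were stated uniformly for ``(d-)separators''.

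I expect no real obstacle: the entire content of the corollary is already carried by the two preceding results, and what remains is a one-line appeal to minimality. The only thing to be careful about is the bookkeeping of conventions --- that ``(d-)separator given $E$'' means $B \perp D \,|\, M \cup E$ and that ``minimal'' refers to inclusion-minimality within exactly this class --- so that the separator produced by Lemma~\ref{lem:mbset_dsep} lies in the family over which $M$ is assumed minimal, making the comparison legitimate.
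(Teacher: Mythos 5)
Your argument is correct and coincides with the paper's own proof: apply Lemma~\ref{lem:mbset_dsep} to get that $\mb_M(B|E)$ is a (d-)separator, observe $\mb_M(B|E)\subseteq M$ by construction, and invoke minimality of $M$ to conclude equality. Nothing further is needed.
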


\begin{proof}
      Lemma \ref{lem:mbset_dsep} ensures that $\mb_{M}(B|E)$ is a d-separator (resp.~separator) between $B$ and $D$ given $E$. Since $\mb_{M}(B|E) \subseteq M$ and $M$ is minimal, we deduce that $\mb_{M}(B|E) = M$.
\end{proof}

\begin{lem}\label{lem:minimalDseparatingSetInM}
Let $B$ and $D$ given $E$ be three sets of vertices of an undirected graphical model (resp.~directed graphical model) $G= (V,E)$. 
Let $M$ be a separator between $B$ and $D$ given $E$ (resp.~a d-separator between $B$ and $D$ given $E$ in $\casc{B \cup D \cup E}$). 
Then $\mb_M(B|E)$ is a (d-)separator between $B$ and $D$ given $E$, and $\mb_{\mb_M(B|E)}(D|E)$ is a minimal (d-)separator between $B$ and $D$ given $E$.
\end{lem}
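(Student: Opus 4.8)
The plan is to prove the two claims of Lemma~\ref{lem:minimalDseparatingSetInM} in sequence, handling the undirected and directed cases together where possible. The first claim, that $\mb_M(B|E)$ is a (d-)separator between $B$ and $D$ given $E$, is essentially Lemma~\ref{lem:mbset_dsep}; in the directed case the only subtlety is the hypothesis that $M$ lives inside $\casc{B \cup D \cup E}$, which I would use to invoke Lemma~\ref{lem:IntersectionWithAscendantsIsStillDseparating} and Corollary~\ref{coro:existanceOfADSeparatingSubset} so that working with ascendant-restricted separators is justified. So the real content is the \emph{minimality} of $S := \mb_{\mb_M(B|E)}(D|E)$ as a (d-)separator between $B$ and $D$ given $E$.

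\textbf{Reducing to two facts.} Write $M_B := \mb_M(B|E)$ for brevity, so that $S = \mb_{M_B}(D|E)$. I would first record that $S$ \emph{is} a (d-)separator: applying Lemma~\ref{lem:mbset_dsep} with the roles of $B$ and $D$ swapped (and with $M_B$ in place of $M$, which is legitimate since $M_B$ was just shown to be a (d-)separator) gives that $\mb_{M_B}(D|E)$ separates $D$ and $B$ given $E$. It then remains to show $S$ is minimal, i.e.\ that no strict subset $S' \subsetneq S$ has $S' \cup E$ (d-)separating $B$ and $D$. The natural strategy is to show that \emph{every} vertex $v \in S$ is indispensable: for each such $v$ I must exhibit a $B$-$D$ trail that is active given $(S \setminus \{v\}) \cup E$. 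By the characterization~\eqref{eq:MB_def} applied to $S = \mb_{M_B}(D|E)$, each $v \in S$ is not (d-)separated from $D$ given $E \cup (M_B \setminus (D \cup \{v\}))$, which yields an active $v$-$D$ trail whose only intersection with $M_B \cup E$ is at $v$. Symmetrically, since $v \in S \subseteq M_B = \mb_M(B|E)$, the characterization applied to $M_B$ gives an active $B$-$v$ trail meeting $M \cup E$ only at $v$.

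\textbf{Splicing the two trails.} The crux is to glue the active $B$-$v$ trail $P$ and the active $v$-$D$ trail $Q$ into a single $B$-$D$ trail that remains active given $(S \setminus \{v\}) \cup E$. In the undirected case this is clean: the concatenation $P \cdot Q$ is a $B$-$D$ path, and by minimality of the two component blankets neither half meets $S \setminus \{v\}$ nor $E$ except possibly at $v$ itself, so removing $v$ from the conditioning set reactivates the whole path, contradicting any claim that $S \setminus \{v\}$ separates. The directed case is where I expect the main obstacle: concatenating $P$ and $Q$ at $v$ may \emph{create} a v-structure at $v$ (if both trails enter $v$ via incoming arcs), and one must check that this v-structure is activated — which it is precisely because $v \in S$ forces $v$ to lie in $\casc{\cdots}$-type positions, so $\cdsc{v}$ meets the conditioning set. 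I would lean on Lemma~\ref{lem:BDtrailintersectDseparatingSet} and Lemma~\ref{lem:allVerticesDseparatedByM} to control descendants and to ensure no \emph{other} vertex of the spliced trail blocks it. The delicate bookkeeping is confirming that the two half-trails, each active relative to a \emph{different} conditioning set ($M \cup E$ for $P$ and a subset of $M_B \cup E$ for $Q$), remain jointly active relative to the single set $(S \setminus \{v\}) \cup E$; this requires checking that intermediate v-structures retain descendants in the smaller set and that non-v-structure vertices of $P$ stay outside $S \setminus \{v\}$, using that $S \subseteq M_B \subseteq M$. Once indispensability of every $v \in S$ is established, minimality follows and the lemma is proved.
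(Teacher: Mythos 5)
Your overall architecture matches the paper's: apply Lemma~\ref{lem:mbset_dsep} twice to get that $M_B=\mb_M(B|E)$ and $S=\mb_{M_B}(D|E)$ are (d-)separators, then for each $v\in S$ use the characterization~\eqref{eq:MB_def} to extract a $B$-$v$ trail avoiding $M\cup E$ (off v-structures) except at $v$ and a $v$-$D$ trail avoiding $M_B\cup E$ except at $v$, and splice them. The undirected half of your argument is fine. The gap is in the directed case, at exactly the point you flag as "delicate bookkeeping": you propose to show the spliced trail is \emph{active} given $(S\setminus\{v\})\cup E$, and your justification for the possibly newly created v-structure at $v$ — that $v\in\casc{B\cup D\cup E}$ forces $\cdsc{v}$ to meet the conditioning set — is false. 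Having a descendant in $B\cup D\cup E$ does not put a descendant of $v$ in $(S\setminus\{v\})\cup E$; the same problem afflicts every v-structure of the two half-trails, whose activating descendants live in $(M\setminus\{v\})\cup E$ and $(M_B\setminus\{v\})\cup E$ respectively, not in the much smaller set $(S\setminus\{v\})\cup E$. The paper sidesteps this entirely: it never claims the spliced trail is active. It only observes that the spliced trail lies in $\casc{B\cup D\cup E}$ and meets $(S\setminus\{v\})\cup E$ solely on v-structures, and then invokes Lemma~\ref{lem:BDtrailintersectDseparatingSet}, whose proof performs the rerouting through descendants that your direct activation argument is missing. To repair your version you would either have to reprove that rerouting inline or, more simply, quote Lemma~\ref{lem:BDtrailintersectDseparatingSet} as the paper does.

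A second, smaller omission: in the directed case, showing that $S\setminus\{v\}$ fails for every single $v$ does not by itself give minimality in the paper's sense (failure of \emph{every} strict subset), because d-separation is not monotone under shrinking the conditioning set. The paper closes this with Corollary~\ref{coro:existanceOfADSeparatingSubset} (equivalently Lemma~\ref{lem:addingVerticesDseparator}, using $S\subseteq\casc{B\cup D\cup E}$): if some $N\subsetneq S$ d-separated, then so would $S\setminus\{v\}\supseteq N$. You cite that corollary only to justify the ascendant restriction in the first claim, not for this step, so you should add it here as well.
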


\begin{proof}[Proof of Lemma \ref{lem:minimalDseparatingSetInM} in undirected graphical models]
	Let $M' = \mb_M(B|E)$ and $M''$ be equal to $\mb_{\mb_M(B|E)}(D|E)$. Lemma \ref{lem:mbset_dsep} ensures that $M'$ and $M''$ are separators between $B$ and $D$ given $E$. We prove that $M''$ is minimal. Let $v \in M''$. There exists a path $P$ from $B$ to $v$ such that $P \cap (M \cup E) \backslash \{x \} = \emptyset$ and there exists a path $Q$ from $v$ to $D$ such that $Q \cap (M' \cup E) \backslash \{x\} = \emptyset$. Consider the path R composed of $P$ followed by $Q$. Then $R$ is a $B$-$D$ path with $R \cap (M'' \cup E \backslash \{v\} ) = \emptyset$. We deduce that $R$ is not separated by $M'' \backslash \{v\} \cup E$, which implies that $M''\backslash \{v\}$ is not a separator given $E$. It achieves the proof.
\end{proof}

\begin{proof}[Proof of Lemma \ref{lem:minimalDseparatingSetInM} in directed graphical models]  
	Let $M' = \mb_M(B|E)$ and $M''$ be defined as $\mb_{\mb_M(B|E)}(D|E)$. We prove that $M'' = \mb_{\mb_M(B|E)}(D|E)$ is a minimal d-separator. Lemma \ref{lem:mbset_dsep} ensures that $M'$ and $M''$ are d-separators between $B$ and $D$ given $E$.
	Let $v$ be a vertex in $M''$.
	Let $Q$ be a $B$-$v$ trail active given $M \cup E\backslash \{v\}$, and $R$ be a $v$-$D$ trail active given $M' \cup E \backslash \{v\}$, and $P$ the trail composed of $R$ followed by $Q$. Then $P$ is a $B$-$D$ trail in $\casc{B \cup D \cup E}$ that intersects $M'' \cup E \backslash \{v\}$ only on v-structures. 
	Hence, Lemma~\ref{lem:BDtrailintersectDseparatingSet} ensures that $M'' \backslash \{v\}$ is not a d-separator, and Corollary~\ref{coro:existanceOfADSeparatingSubset} enables to conclude that $M''$ is a minimal d-separator.
\end{proof}

The following theorem is a stronger version of Theorem~\ref{theo:MB_directional_closed_formula}.

\begin{theo}\label{theo:characterization_mbdirectional}
Let $B$ and $D$ given $E$ be three sets of vertices of an undirected graphical model (resp.~directed graphical model) $G= (V,E)$. 
Let $M$ be a separator between $B$ and $D$ given $E$ (resp.~a d-separator between $B$ and $D$ given $E$ in $\casc{B \cup D \cup E}$). 
Then $M_1 = \mb_{\mb_M(B|E)}(D|E)$ is the unique minimal (d-)separator between $B$ and $D$ given $E$ such that $ M_1 \perp D | M_2 \cup E$ for any (d-)separator $M_2$ in $M$.
\end{theo}

\begin{proof}[Proof of uniqueness in Theorem~\ref{theo:characterization_mbdirectional}]
Suppose that $M_1$ and $M'_1$ are minimal (d-)separator between $B$ and $D$ given $E$ such that $ M_1 \perp D | M_2 \cup E$ for any (d-)separator $M_2$ in $M$.
Then $M'_1 \perp D | M_1$ gives $\mb_{M_1 \cup M_1'}(D|E) \subseteq M_1$.
As $M_1$ is a minimal d-separator, Corollary~\ref{coro:mbset_dsep_min} gives $\mb_{M_1}(D|E) = M_1$, and we deduce $\mb_{M_1 \cup M_1'}(D|E) = M_1$.
Exchanging the roles of $M_1$ and $M'_1$ gives $\mb_{M_1 \cup M_1'}(D|E) = M'_1$, and we obtain $M_1=M'_1$ and the uniqueness result.
\end{proof}


\begin{proof}[Proof of Theorem \ref{theo:characterization_mbdirectional} in undirected graphical models]
	Lemma~\ref{lem:minimalDseparatingSetInM} ensures that $M_1$ is a minimal separator between $B$ and $D$ given $E$.
	Let $M_2 \subseteq M$ be a separator between $B$ and $D$ given $E$. We prove that $M_1 \perp D | M_2 \cup E$. Suppose that $M_1 \nperp D | M_2 \cup E$.
	There exists an active path between $v \in M_1$ and $D$ given $M_2 \cup E$. Let $Q$ be such a path. Therefore we have $Q \cap (M_2 \cup E) = \emptyset$. Since $v \in \mb_{M}(B|E)$, there exists an active path between $B$ and $v$ given $M \cup E \backslash \{v\}$. Let $P$ be such a path. Therefore we have $P \cap (M \cup E) = \emptyset$. Let $R$ be the path composed of $P$ followed by $Q$. $R$ is a $B$-$D$ path and $R \cap (M_2 \cup E) =\emptyset$, which contradicts the assumption on $M_2$. 
\end{proof}

\begin{proof}[Proof of Theorem \ref{theo:characterization_mbdirectional} in directed graphical models]
	Lemma~\ref{lem:minimalDseparatingSetInM} ensures that $M_1$ is a minimal d-separator between $B$ and $D$ given $E$.
	Let $M_2 \subseteq M$ be a d-separator between $B$ and $D$ given $E$. We prove $M_1 \perp D | M_2 \cup E$. Suppose that $M_1 \nperp D | M_2 \cup E$, there exists an active trail between $v \in M_1$ and $D$ given $M_2 \cup E$. Let $Q$ be such a trail. Since $v \in \mb_{M}(B|E)$, there exists an active trail from $B$ to $v$ given $M \cup E \backslash \{v\}$. Let $P$ be such a trail and $R$ be the trail composed of $P$ followed by $Q$. $R$ is a trail in $\casc{B \cup D \cup M_2 \cup E}$ and $M_2 \cup E$ intersects $R$ only on v-structures. Lemma \ref{lem:BDtrailintersectDseparatingSet} ensures that $M_2$ is not a d-separator between $B$ and $D$ given $E$, which contradicts the assumption on $M_2$.
\end{proof}

\subsection{Proof of Proposition~\ref{prop:equivalence_def_directionalMB} in directed graphical models}
\label{sub:proof_of_alternative_definition}

The two following lemmas are intermediary technical results for the proof of the alternative definition of the directional Markov Blanket in directed graphical models in Proposition \ref{prop:equivalence_def_directionalMB}.

\begin{lem} \label{lem:elementOfMinimalIsDseparatedByBlanket}
Let $M$ be a minimal d-separator between $B$ and $D$ given $E$. 
Let $N \subseteq \casc{B \cup D \cup E}$. 
Let $L = \mb_{M \cup N}(B|E)$, and $O = \mb_{L}(D|E)$.
Then $L \cap M = O \cap M$.
\end{lem}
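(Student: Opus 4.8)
The plan is to treat the two inclusions of $L \cap M = O \cap M$ separately. The inclusion $O \cap M \subseteq L \cap M$ is immediate: since $O = \mb_L(D|E) \subseteq L$ by Definition~\ref{de:markov_blanket_in_a_set}, intersecting with $M$ gives it for free. All the work therefore goes into $L \cap M \subseteq O$. Before starting I would record the ambient facts that make the trail arguments legal. Since $M$ is a minimal d-separator, Lemma~\ref{lem:IntersectionWithAscendantsIsStillDseparating} together with minimality gives $M \subseteq \casc{B\cup D\cup E}$, hence $M \cup N \subseteq \casc{B\cup D\cup E}$; then Lemma~\ref{lem:addingVerticesDseparator} shows $M\cup N$ is a d-separator between $B$ and $D$ given $E$, and two applications of Lemma~\ref{lem:mbset_dsep} show that $L$ and $O$ are d-separators as well, with $L, O \subseteq \casc{B\cup D\cup E}$. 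In particular every $B$-$D$ trail I build lies in $\casc{B\cup D\cup M\cup E}$, so Lemma~\ref{lem:BDtrailintersectDseparatingSet} is available throughout. I would also note, via Theorem~\ref{theo:characterization_mbdirectional} applied to the d-separator $M\cup N$, that $O$ is a minimal d-separator and that $O \perp D \mid M \cup E$, keeping this in reserve.

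\textbf{Core inclusion and the two available trails.} I fix $v \in L \cap M$ and aim to show $v \in O$, that is, by Theorem~\ref{prop:mbInCdefinition}, that there is a $v$-$D$ trail active given $E \cup (L\setminus(D\cup\{v\}))$ (the membership $v\in L$ being already granted). Two active trails are at my disposal. From $v \in L = \mb_{M\cup N}(B|E)$ and Theorem~\ref{prop:mbInCdefinition}, there is a $B$-$v$ trail $P$ active given $E \cup ((M\cup N)\setminus(B\cup\{v\}))$; in particular every internal non-collider of $P$ avoids $(M\cup N)\cup E$. From $v \in M$ and the minimality of $M$, the set $M\setminus\{v\}$ fails to d-separate, so there is a $B$-$D$ trail active given $E\cup(M\setminus\{v\})$ on which $v$ appears as a non-collider; its $v$-$D$ half is a $v$-$D$ trail $R$ active given $E\cup(M\setminus\{v\})$.

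\textbf{Promoting $R$.} The heart of the proof is to promote $R$ to a $v$-$D$ trail that is active given the \emph{different} conditioning set $E\cup(L\setminus(D\cup\{v\}))$. I would mirror the concatenation arguments of Lemma~\ref{lem:minimalDseparatingSetInM} and Theorem~\ref{theo:characterization_mbdirectional}: assuming for contradiction that no such trail exists, so that $v \perp D \mid E\cup(L\setminus(D\cup\{v\}))$, I would concatenate $P$ with $R$ at $v$, restrict to the ancestral part so the resulting $B$-$D$ trail lies in $\casc{B\cup D\cup E}$, and locate its intersections with the various separators. Lemma~\ref{lem:BDtrailintersectDseparatingSet} forces any $B$-$D$ trail in the ancestral set to meet $M\cup E$ on a non-v-structure, which pins down where blocking can occur and yields a $B$-$D$ trail active given $E\cup(M\setminus\{v\})$ that avoids the remaining vertices of $L$, contradicting either the minimality of $M$ or the fact that $L$ d-separates.

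\textbf{The main obstacle.} The step I expect to be hardest is exactly this promotion of $R$: passing from the conditioning set $M\setminus\{v\}$ to $L\setminus(D\cup\{v\})$ alters the conditioning in two ways that must each be controlled. First, a non-collider of $R$ could fall into $L\setminus M \subseteq N$ and thereby block $R$; here I would use that such a vertex lies in $L=\mb_{M\cup N}(B|E)$, hence carries its own active $B$-trail, and splice it with $R$ to manufacture a $B$-$D$ trail contradicting that $M$ d-separates. Dually, a collider of $R$ could lose the descendant that witnessed it once vertices of $M\setminus L$ are dropped from the conditioning set; this is where the hypothesis $N \subseteq \casc{B\cup D\cup E}$ is essential, since it guarantees that colliders still admit ancestral witnesses compatible with $L\cup E$, exactly as in the ancestral reductions underlying Lemmas~\ref{lem:BDtrailintersectDseparatingSet}--\ref{lem:minimalDseparatingSetInM}. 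Reconciling these two effects simultaneously, rather than either in isolation, is the delicate part of the argument.
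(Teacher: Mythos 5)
Your skeleton matches the paper's up to the point you yourself flag as the main obstacle, and that obstacle is where the argument actually breaks: you never complete the ``promotion'' of $R$, and the half of it you do address is resolved with the wrong tool. Concretely, to turn the $v$-$D$ trail $R$, active given $(M\setminus\{v\})\cup E$, into one active given $(L\setminus(D\cup\{v\}))\cup E$, you must handle colliders of $R$ whose only descendants in the old conditioning set lie in $M\setminus L$; these colliders lose their witnesses under the new conditioning set, and the hypothesis $N\subseteq\casc{B\cup D\cup E}$ says nothing about $M\setminus L$, so the appeal to ``ancestral witnesses compatible with $L\cup E$'' is not an argument. Your treatment of the other failure mode (a non-collider of $R$ falling in $L\setminus M\subseteq N$) is essentially right, but note that the spliced $B$-$D$ trail you build there is generally \emph{not} active given $M\cup E$ (its colliders may be witnessed only inside $N$), so it does not ``contradict that $M$ d-separates'' directly; the contradiction must go through Lemma~\ref{lem:BDtrailintersectDseparatingSet}, which only asks the trail to lie in the ancestral set and to meet $M\cup E$ on a non-v-structure.

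That last observation is precisely how the paper escapes the obstacle: it never promotes $R$ at all. It assumes $x\in(L\cap M)\setminus O$, takes a $B$-$x$ trail $Q$ active given $(L\setminus\{x\})\cup E$ (legitimate because $L=\mb_L(B|E)$, via Proposition~\ref{prop:mbInCcupIndep}) together with the $x$-$D$ trail $R$ from minimality of $M$, and applies Lemma~\ref{lem:BDtrailintersectDseparatingSet} twice: first with $O$ as the separator, to the concatenation of $Q$ and $R$, forcing a non-v-structure $y$ of $R$ to lie in $O\subseteq L$; then, since $y\in L$ provides a $B$-$y$ trail $S$ meeting $M\cup N\cup E$ only on v-structures, with $M$ as the separator, to $S$ followed by the $y$-$D$ tail of $R$, which meets $M\cup E$ only on v-structures --- a contradiction. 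Because Lemma~\ref{lem:BDtrailintersectDseparatingSet} never requires any collider of the exhibited trail to be opened by the conditioning set, the witness-transfer problem you could not resolve simply does not arise. As written, the central inclusion $L\cap M\subseteq O$ is not established; repairing your proof essentially requires restructuring it along these lines.
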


\begin{proof}
Remark that $M \subseteq \casc{B \cup D \cup E}$ because $M$ is a minimal d-separator between $B$ and $D$ given $E$. Inclusion $O \subseteq L$ gives $O \cap M \subseteq L \cap M$. 
Suppose that $O \cap M \neq L \cap M$. Since $L \cap M $ contains strictly $O \cap M$, it ensures the existence of $x$ in $(L \cap M) \backslash O$.
By definition of $L$ there exists a $B$-$x$ trail $Q$ in $\casc{B\cup D \cup E}$ that is active given $(L \backslash \{x\}) \cup E$. 
Since $O\subseteq L$, any vertex of $Q$ in $O \cup E$ is a v-structure.
As $M$ is minimal there is a $x$-$D$ trail $R$ that is active given $M \cup E$.
Since $Q$ followed by $R$ is a $B$-$D$ trail in $\casc{B \cup D \cup O \cup E}$, and $Q$ does not intersect $O \cup E$ on a vertex which is not a v-structure, by Lemma~\ref{lem:BDtrailintersectDseparatingSet}, there is a non v-structure of $R$ in $O$. 
Starting from $x$, let $y$ be the last such vertex. 
Let $T$ be the $y$-$D$ subtrail of $R$. 
Note that $R$ can intersect $M \cup E$ only on v-structures, and hence $y \notin M$ and $T$ can intersect $M$ only on v-structures.
As $y \in L = \mb_{M \cup N}(B|E)$, there is a $B$-$y$ trail $S$ in $\casc{B \cup D \cup E}$ that intersects $M$ only on v-structures. 
Hence, $S$ followed by $T$ is a $B$-$D$ trail in $\casc{B \cup D \cup E \cup M}$ that intersects $M \cup E$ only on v-structures, and Lemma~\ref{lem:BDtrailintersectDseparatingSet} gives a contradiction.
\end{proof}

\begin{lem}\label{lem:intermediate_lemma_dseparation}
	Let $M$ and $N$ be two d-separators between $B$ and $D$ given $E$. If $N$ is minimal and $M \perp D | N \cup E$, then
	\begin{equation}\label{eq:dsep_intermediate_lemma}
	 	B \perp N | M \cup E
	 \end{equation} 
\end{lem}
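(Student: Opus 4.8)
The plan is to prove the equivalent per-vertex statement: under the hypotheses, I would show that for every $x \in N$ one has $x \perp B \mid M \cup E$. This is equivalent to the conclusion $B \perp N \mid M \cup E$, because any trail from $B$ to $N$ active given $M \cup E$ restricts, at its first vertex in $N$, to an active $B$–$x$ trail given $M \cup E$. The whole argument is by contradiction: fix $x \in N$ and an active $B$–$x$ trail $P$ given $M \cup E$. The key gadget comes from minimality of $N$: since $N \backslash \{x\}$ is not a d-separator, there is a $B$–$D$ trail active given $(N \backslash \{x\}) \cup E$ on which $x$ is necessarily a non-v-structure (adding the single vertex $x$ to the conditioning set can only block it there); its $x$–$D$ half, call it $W_2$, is then an $x$–$D$ trail active given $N \cup E$. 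The contradiction I aim for throughout is the construction of an $M$–$D$ trail active given $N \cup E$, which is forbidden by the hypothesis $M \perp D \mid N \cup E$.

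I would first dispose of the easy configurations. If $x \in M$, then $W_2$ is itself an $M$–$D$ trail active given $N \cup E$, contradicting $M \perp D \mid N \cup E$; so I may assume $x \notin M$. Since $N$ is a minimal d-separator we have $N \subseteq \casc{B \cup D \cup E}$ (by Lemma~\ref{lem:IntersectionWithAscendantsIsStillDseparating} and minimality), hence $x \in \casc{B \cup D \cup M \cup E}$ and Lemma~\ref{lem:allVerticesDseparatedByM} applies: as $x \not\perp B \mid M \cup E$ by assumption, it forces $x \perp D \mid M \cup E$. Therefore $W_2$, although active given $N \cup E$, is \emph{blocked} by $M \cup E$, and I inspect the first blocking vertex $z$ along $W_2$ starting from $x$. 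If $z$ is a non-v-structure lying in $M \cup E$, then since non-v-structures of $W_2$ avoid $N \cup E$ we get $z \in M$, and the $z$–$D$ sub-trail of $W_2$ is an $M$–$D$ trail active given $N \cup E$ — the desired contradiction.

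The remaining case, where the first blockage $z$ is a v-structure of $W_2$ with no descendant in $M \cup E$, is the main obstacle. Here $z \notin M \cup E$, and activeness of $W_2$ given $N \cup E$ forces $z$ to have a descendant $n \in N$; moreover the $x$–$z$ sub-trail of $W_2$ and the directed descent $z \to n$ are both active given $M \cup E$ (their vertices are descendants of $z$, hence outside $M \cup E$), and they meet at $z$ as a non-v-structure. The trouble is that \emph{no} vertex of $M$ appears at this blockage, so to reach the target contradiction one must locate an $M$-vertex on $P$ and splice it to a $D$-reaching tail while keeping the concatenation a genuine trail that meets $N \cup E$ only on v-structures — the junctions at $x$ (between $P$ and $W_2$) and at $z$ must be controlled so as not to create a non-v-structure in the conditioning set. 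My plan is to resolve this by an extremal choice, taking the counterexample $(x,P)$ with a minimum number of arcs in $P$, and then using the descent $z \to n$ to perform trail surgery producing a strictly smaller counterexample for the $N$-vertex $n$, while Lemma~\ref{lem:BDtrailintersectDseparatingSet} (applied with $N$, respectively $M$, as the separator) rules out the degenerate splices. An alternative route that sidesteps the junction bookkeeping entirely is to reformulate both sides through Theorem~\ref{prop:mbInCdefinition} and Proposition~\ref{prop:mbInCcupIndep}: the hypothesis reads $\mb_{M \cup N}(D \mid E) \subseteq N$ and the conclusion reads $\mb_{M \cup N}(B \mid E) \subseteq M$, after which Lemma~\ref{lem:elementOfMinimalIsDseparatedByBlanket} can be leveraged. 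In either route, I expect the v-structure/junction case — not the overall strategy — to contain essentially all of the work.
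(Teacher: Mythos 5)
There is a genuine gap: your Case 2 (the first blockage $z$ of $W_2$ by $M \cup E$ is a v-structure with no descendant in $M \cup E$) is exactly where the proof has to close, and you leave it as a plan rather than an argument. The surgery you sketch --- an extremal choice of $(x,P)$ followed by a splice through the descent $z \to n$ to produce a ``smaller counterexample'' at the $N$-vertex $n$ --- is never carried out, and it is not shown that the spliced object is a trail with the required properties or that the descent terminates. By your own admission this case contains essentially all of the work, and that work is missing. The alternative route via $\mb_{M\cup N}(\cdot\,|E)$ is likewise only a gesture: the translations of hypothesis and conclusion into blanket inclusions are correct, but no implication between them is established.

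The missing idea is that you are applying Lemma~\ref{lem:BDtrailintersectDseparatingSet} in the wrong place. You try to make the $x$-$D$ trail active given $M \cup E$ (hence the detour through Lemma~\ref{lem:allVerticesDseparatedByM} and the analysis of the first blocking vertex), but activeness given $M\cup E$ is not needed. Your Case 1 reasoning already shows more than you use: $W_2$ cannot meet $M$ \emph{anywhere}, since any $m \in M \cap W_2$ would give an $m$-$D$ subtrail active given $N \cup E$, contradicting $M \perp D | N \cup E$; and the non-v-structures of $W_2$ avoid $E$ because $W_2$ is active given $N \cup E$. Hence the concatenation of $P$ (active given $M \cup E$) with $W_2$ is a $B$-$D$ trail lying in $\casc{B \cup D \cup M \cup E}$ that intersects $M \cup E$ only on v-structures --- and this alone contradicts Lemma~\ref{lem:BDtrailintersectDseparatingSet}, whose conclusion is precisely that such a trail must meet $M \cup E$ on a non-v-structure. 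Because that lemma does not require the concatenated trail to be active, no junction bookkeeping at $x$ or at $z$ is needed, and your Case 2 disappears entirely. This is the paper's proof; your setup (contradiction, minimality of $N$ yielding $W_2$, aiming at an $M$-$D$ trail active given $N\cup E$) is the same, so once you replace the blocking-vertex analysis by this direct concatenation you recover it.
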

\begin{proof}
Suppose that $B \nperp N | M \cup E$. Let $x$ be a vertex of $N \backslash M$ that is not d-separated from $B$ given $M \cup E$, and $Q$ be a $B$-$x$ trail that is active given $M \cup E$. 
As $N$ is minimal, $N \subseteq \casc{B \cup D \cup E}$ and there is an $x$-$D$ trail $R$ that is active given $N \cup E$.
This trail does not intersect $M$ as this would contradict $M \perp D | N \cup E$.
Hence $Q$ followed by $R$ is a $B$-$D$ trail in $\casc{B \cup D \cup E \cup M}$ that intersects $M \cup E$ only on v-structures, which gives a contradiction.
\end{proof}

\begin{proof}[Proof of Proposition \ref{prop:equivalence_def_directionalMB} in directed graphical models]
	Let $M$ be a minimal d-separator between $B$ and $D$ given $E$.
	
	We start by proving ``not \eqref{eq:nearestDseparator_newversion}'' implies ``not \eqref{eq:nearestDseparator}''.
	Suppose that there exists a minimal d-separator $M'$ such that $B \nperp M' | M \cup E$. Since $M'$ is minimal, Lemma \ref{lem:intermediate_lemma_dseparation} ensures that $D \nperp M | M' \cup E$. There exists a d-separator $M'$ such that $D \nperp M | M' \cup E$.


	We now prove ``not \eqref{eq:nearestDseparator}'' implies ``not \eqref{eq:nearestDseparator_newversion} ''.
	Let $M$ be a minimal d-separator, and $M'$ be a d-separator such that $D \nperp M | M' \cup E$.  
	Let $M'' = M' \cap \casc{B \cup D \cup E}$.
	Let $M_1 = \mb_{\mb_{M \cup M''}(B|E)}(D|E)$. 
	Lemma~\ref{lem:IntersectionWithAscendantsIsStillDseparating} ensures that $B \perp D | M'' \cup E$. Since $M'' \subseteq \casc{B \cup D \cup E \cup M}$ Lemma~\ref{lem:addingVerticesDseparator} ensures that $M \cup M''$ is a d-separator between $B$ and $D$ given $E$. Hence Lemma~\ref{lem:minimalDseparatingSetInM} ensures that $M_1$ is a minimal d-separator between $B$ and $D$ given $E$.
	To prove ``not \eqref{eq:nearestDseparator_newversion}'', we prove $B \nperp M_1 | M \cup E$.	

	Let $x$ be a vertex of $M$ such that $x \nperp D | M' \cup E$. We start by proving $x \perp B | M'' \cup E$. 
	Let $Q$ be a $B$-$x$ trail. We prove that $Q$ is not active given $M'' \cup E$.
	Since, $x \in \casc{B\cup D \cup E}$ and $B \subseteq \casc{B\cup D \cup E}$, if $Q$ intersects $V \backslash \casc{B \cup D \cup E}$, then it contains a v-structure in $V \backslash \casc{B \cup D \cup E}$ which cannot be active given $M'' \cup E$ because $M''\subseteq \casc{B \cup D \cup E}$.
	Suppose now that $Q$ is in $\casc{B\cup D \cup E}$, and let $R$ be an $x$-$D$ trail that is active given $M' \cup E$. As $M'$ d-separates $B$ and $D$ given $E$, Lemma~\ref{lem:BDtrailintersectDseparatingSet} ensures that $Q$ followed by $R$ intersects $M'$ on a non-v structure. This intersection is necessarily in $Q$ and in $M''$. Hence $Q$ is not active given $M'' \cup E$. 
	And we have proved $x \perp B | M'' \cup E$.


	
	We now prove that $x$ does not belong to $M_1$.
	By Lemma~\ref{lem:elementOfMinimalIsDseparatedByBlanket}, it suffices to prove that $x$ does not belong to $\mb_{M \cup M''}(B|E)$.
	Suppose that there is a $B$-$x$ trail active given $(M \cup M'' \backslash \{x\}) \cup E$. 
	Let $P$ be such a trail with a minimal number of v-structure.
	Remark that $P$ is in $\casc{B \cup D \cup E}$.
	Let $b_0$ be the first vertex of the trail starting from $B$.
	Let $s_1,\ldots,s_k$ be the v-structure of $P$ that have no descendants in $M'' \cup E$.
	We prove recursively that $s_i$ has a descendant $b_i$ in $B$.
	Indeed, $s_i$ has either a descendant in $B$ or in $D$.
	By iteration hypothesis, it cannot have a descendant in $D$ as otherwise we would have a $b_{i-1}$-$D$ trail that is active given $M'' \cup E$.
	Hence it has a descendant $b_i$ in $B$, with gives the iteration hypothesis.
	Hence there is a $b_k$-$x$ path that is active given $M'' \cup E$, which gives a contradiction.


	The set $M \backslash M_1$ contains $x$ and is therefore non-empty. Theorem~\ref{theo:characterization_mbdirectional} ensures that $M_1$ satisfies $M_1 \perp D | M \cup E$. Since $M$ is a minimal d-separator between $B$ and $D$ given $E$, Lemma~\ref{lem:intermediate_lemma_dseparation} ensures that $B \perp M | M_1 \cup E$.
	Proposition \ref{prop:mbInCcupIndep} ensures that $\mb_{M \cup M_1}(B|E) = \mb_{M_1}(B|E)$.
	As $M_1$ is a minimal d-separator between $B$ and $D$ given $E$, Corollary \ref{coro:mbset_dsep_min} ensures that $\mb_{M_1}(B|E) = M_1$. We deduce that $\mb_{M \cup M_1}(B|E) = M_1$.
	We therefore cannot have $M_1 \perp B | M \cup E$, as this would imply $M_1 = \mb_{M \cup M_1}(B|E) = \mb_{M}(B|E)= M$,
	which gives ``not \eqref{eq:nearestDseparator_newversion}''.
\end{proof}

\bibliographystyle{plainnat}
\bibliography{stoOptimGraphModel}


\end{document}